\numberwithin{equation}{section}
\theoremstyle{plain}
\newtheorem{theorem}[equation]{Theorem}
\newtheorem{lemma}[equation]{Lemma}
\theoremstyle{definition}
\newtheorem*{acknowledgment}{Acknowledgment}
\theoremstyle{remark}
\newtheorem{remark}[equation]{Remark}
\newcommand{\VMO}{\mathrm{VMO}}
\newcommand{\BMO}{\mathrm{BMO}}
\newcommand{\supp}{\operatorname{supp}}
\newcommand{\dist}{\operatorname{dist}}
\newcommand{\diam}{\operatorname{diam}}
\newcommand{\esssup}{\operatorname*{ess\,sup}}
\newcommand\bR{\mathbb{R}}
\newcommand\cQ{\mathcal{Q}}
\newcommand\sB{\mathscr{B}}
\newcommand\sC{\mathscr{C}}
\newcommand\sL{\mathscr{L}}
\newcommand\sP{\mathscr{P}}
\newcommand\sV{\mathscr{V}}
\newcommand\sW{\mathscr{W}}
\providecommand{\ip}[1]{\left\langle#1\right\rangle}
\providecommand{\set}[1]{\{#1\}}
\providecommand{\Set}[1]{\left\{#1\right\}}
\providecommand{\bigset}[1]{\bigl\{#1\bigr\}}
\providecommand{\abs}[1]{\lvert#1\rvert}
\providecommand{\Abs}[1]{\left\lvert#1\right\rvert}
\providecommand{\bigabs}[1]{\bigl\lvert#1\bigr\rvert}
\providecommand{\norm}[1]{\lVert#1\rVert}
\providecommand{\Norm}[1]{\left\lVert#1\right\rVert}
\providecommand{\tri}[1]{\vvvert#1\vvvert}
\renewcommand{\vec}[1]{\boldsymbol{#1}}
\renewcommand{\qedsymbol}{$\blacksquare$}
\begin{document}
\title{Green's function for second order parabolic systems with Neumann boundary condition\thanks{\footnotesize 
This work was supported NRF grants No. 2010-0008224 and R31-10049 (WCU program).}}

\author{Jongkeun Choi\thanks{\footnotesize Department of Mathematics, Yonsei
University, Seoul 120-749,  Republic of Korea (cjg@yonsei.ac.kr).} \and
Seick Kim\footnote{Department of Computational Science and Engineering, Yonsei University, Seoul 120-749,  Republic of Korea (kimseick@yonsei.ac.kr).}}
\date{}
\maketitle

\begin{abstract}
We study the Neumann Green's function for second order parabolic systems in divergence form with time-dependent measurable coefficients in a cylindrical domain $\mathcal{Q}=\Omega\times (-\infty,\infty)$, where $\Omega\subset \bR^n$ is an open connected set such that a multiplicative Sobolev embedding inequality holds there.
Such a domain includes, for example, a bounded Sobolev extension domain, a special Lipschitz domain, and an unbounded domain with compact Lipschitz boundary. 
We construct the Neumann Green's function in $\mathcal{Q}$ under the assumption that weak solutions of the systems satisfy an interior H\"older continuity estimate. 
We also establish global Gaussian bounds for Neumann Green's function under an additional assumption that weak solutions with zero Neumann data  satisfy a local boundedness estimate.
In the scalar case, such a local boundedness estimate is a consequence of De Giorgi-Moser-Nash theory holds for equations with bounded measurable coefficients in Sobolev extension domains, while in the vectorial case, one may need to impose further regularity assumptions on the coefficients of the system as well as on the domain to obtain such an estimate.
We present a unified approach valid for both the scalar and vectorial cases and discuss some applications of our results including the construction of Neumann functions for second order elliptic systems with measurable coefficients in two dimensional domains.
\end{abstract}

\noindent {\footnotesize {\bf AMS subject classifications.} 35K08, 35K10, 35K40}

\noindent {\footnotesize {\bf Key words.} Green function; Neumann function; Neumann heat kernel; Neumann problem; parabolic system; measurable coefficients}


\section{Introduction}
In this article, we are  concerned with Green's function for parabolic systems with Neumann boundary condition, which hereafter we shall refer to as Neumann Green's function.
It is often called Neumann heat kernel if the parabolic system has time-independent coefficients.
In a cylindrical domain $\Omega\times (-\infty,\infty)$, where the base $\Omega$ is an open connected set in $\bR^n$, we consider parabolic differential operator of the form
\begin{equation}		\label{eq1.01a}
\sL \vec u=  \frac{\partial \vec u}{\partial t}- \frac{\partial}{\partial x_\alpha} \left(A^{\alpha\beta}(x,t) \frac{\partial \vec u}{\partial x_\beta}\right).
\end{equation}
Here, we use the summation convention over repeated indices $1\le \alpha,\beta\le n$,  $\vec u$ is a column vector with components $u^1,\ldots, u^N$, $A^{\alpha\beta}(x,t)$ are $N\times N$ matrices whose elements $a^{\alpha\beta}_{ij}(x,t)$ are bounded measurable functions defined on the entire space $\bR^{n+1}$ such that for arbitrary $N\times n$ matrices $\vec \xi$, $\vec \eta$ with elements $\xi_\alpha^i$,  $\eta_\alpha^i$, we have 
\begin{equation}		\label{eq1.03a}
\lambda \abs{\vec \xi}^2 \le a^{\alpha\beta}_{ij} \xi^j_\beta \xi^i_\alpha,\qquad
\Abs{ a^{\alpha\beta}_{ij} \xi^j_\beta \eta^i_\alpha} \le \lambda^{-1} \abs{\vec \xi} \abs{\vec \eta},
\end{equation}
where $\lambda$ is positive constant and $\abs{\xi}^2=\xi_\alpha^i \xi_\alpha^i$.
We emphasize that we allow coefficients to be non-symmetric and time-dependent.
By a Neumann Green's function for the system \eqref{eq1.01a} in $\Omega\times (-\infty,\infty)$, we mean an $N \times N$ matrix valued function $\vec N(x,t,y,s)$ that is a solution of the initial-boundary value problem
\begin{equation*}
\left\{
\begin{array}{cl}
\sL_{x,t} \vec N(x,t,y,s) = 0 \qquad & \text{in }\; \Omega \times (s,\infty),\\
\frac{\partial}{\partial \nu_x} \vec N(x,t,y,s) =0 & \text{on }\; \partial\Omega \times (s,\infty),\\
\vec N(x,t,y,s) = \delta_y(x) \vec I &  \text{on }\; \Omega\times\set{t=s},
\end{array}
\right. 
\end{equation*}
where $\partial/\partial \nu$ is the natural co-normal derivative, $\delta_{y}(\cdot)$ is Dirac delta function concentrated at $y$,  $\vec I$ is the $N\times N$ identity matrix, and the equation as well as the boundary condition should be interpreted in some weak sense.
Equivalently, it can be defined as a function that weakly satisfies
\begin{equation*}
\left\{
\begin{array}{cl}
\sL_{x,t} \vec N(x,t,y,s)= \delta_y(x) \delta_{s}(t) \vec I \quad& \text{in }\; \Omega \times (-\infty,\infty),\\
\frac{\partial}{\partial \nu_x} \vec N(x,t,y,s) =0 \quad &\text{on }\; \partial\Omega \times (-\infty,\infty),\\
\vec N(x,t,y,s)\equiv 0\quad& \text{on }\; \Omega\times (-\infty,s).
\end{array}
\right. 
\end{equation*}
More precise definition of Neumann Green's function is given in Section~\ref{sec:ng}.
In the case when the coefficients are time-independent,  $\vec K(x,y,t)\coloneq \vec N(x,t,y,0)$ is usually called a Neumann heat kernel.

We prove that if the base $\Omega$ is such that the multiplicative embedding inequality \eqref{eq:mei} holds there and if weak solutions of the system \eqref{eq1.01a} satisfy an interior H\"older continuity estimate \eqref{IH}, then the Neumann Green's function exists and satisfies a natural growth estimate near the pole; see Theorem~\ref{EP.thm1}.
The multiplicative embedding inequality \eqref{eq:mei} holds if, for example,  $\Omega$ is a Sobolev extension domain with finite Lebesgue measure, a special Lipschitz domain, and an unbounded domain with compact Lipschitz boundary, etc.
Also, the H\"older continuity estimate \eqref{IH} holds, for example, in the scalar case (i.e., $N=1$), in the case when $n=2$ and the coefficients are time-independent, and in the case when the coefficients  belong to the VMO class; see Section~\ref{sec:apps}.

We are also interested in the following global Gaussian estimate for the Neumann Green's function:
For any $T>0$, there exists a positive constant $C$ such that for all $t,s$ satisfying $0<t-s<T$ and $x,y \in \Omega$, we have
\begin{equation}        \label{eq1.05a}
\abs{\vec N (x,t,y,s)} \le \frac{C}{(t-s)^{n/2}}\exp\Set{-\frac{\kappa\abs{x-y}^2}{t-s}},
\end{equation}
where $\kappa$ is a positive constant independent of $T$.
We present how one can derive the global estimate like \eqref{eq1.05a} using a local boundedness estimate \eqref{LB} for weak solutions of the  system with zero Neumann data; see Theorem~\ref{GE.thm1}.
It is well known that the local boundedness estimate \eqref{LB} is available, for example, when $N=1$ and $\Omega$ is a bounded extension domain or when $n=2$, the coefficients are time-independent, and $\Omega$ is a bounded Lipschitz domain, etc.; see Section~\ref{sec:apps}.
In fact, we show that the local boundedness estimate is a necessary and sufficient condition for the system to have a global Gaussian estimate for the Neumann Green's function; see Theorem~\ref{GE.thm2}.

The Green's function for a parabolic equation (i.e., $N=1$) with real measurable coefficients in the free space was first studied by Nash \cite{Nash} and its two-sided Gaussian bounds were later obtained by Aronson \cite{Aronson}.
There is a vast literature on Green's function for parabolic equations satisfying Dirichlet or Neumann conditions.
In the case when the coefficients are time-independent, they are called Dirichlet or Neumann heat kernel and have been studied by many authors; see Davies \cite{Davies89} and references therein, \cite{Robinson, VSC} for a treatment of heat kernels on non-Euclidean setting, and also a very recent article \cite{GSC11}.
It is well known that Aronson's bounds are no longer available for a parabolic equation with complex valued coefficient when $n\ge 3$.
Related to Kato square root problem, Auscher \cite{Auscher} obtained an upper Gaussian bound of the heat kernel for an elliptic operator whose coefficients are complex $L^\infty$-perturbation of real coefficients; see \cite{AMcT, AT, AT2} and also \cite{AtE, Daners, Ouhabaz05} for related results.
The Dirichlet Green's function for a parabolic system with time-dependent, measurable coefficients was studied in papers by Cho et al. \cite{CDK, CDK2}, where Dirichlet Green's function is constructed in any cylindrical domain under an assumption that the weak solutions of the system has an interior H\"older continuity estimate that is equivalent to ours.
In many respects, this article can be considered as natural follow-up of their papers.
However, we must point out it requires some effort and caution to handle Neumann condition and there are certainly some subtleties involved here.
The novelty of our paper lies in the following points.
First, we allow time-dependent, non-symmetric coefficients and do not specifically assume any regularity on them.
Secondly, we treat parabolic equations and systems simultaneously and do not rely on methods that are specific to single equations case, such as the maximum principle or Harnack's inequality.
Finally, our method separates existence of Neumann Green's function from its global estimates, which enables us to construct Neumann Green's functions in a large class of domains, where global Gaussian estimates for Neumann functions may not be available anymore.
In this paper, we only deal with so called strongly elliptic systems as it is described in \eqref{eq1.03a}, but our method can be as well applied to systems satisfying a weaker condition; see \cite{TKB1} for the Neumann heat kernel for the elliptic system of linear elasticity.
For the heat kernel for second-order elliptic operators in divergence form satisfying Robin-type boundary conditions, we bring attention to a very recent article \cite{GMN}.

The organization of the paper is as follows. In Section~\ref{sec:pre}, we introduce some notation and definitions including the precise definition of Neumann Green's function.
In Section~\ref{sec:main}, we state our three main theorems, which we briefly described above.
In Section~\ref{sec:apps}, we give examples for which our main results apply, and also in Theorem~\ref{thm4.4a}, we construct the Neumann function for second order elliptic systems with measurable coefficients in an extension domain $\Omega\subset \bR^2$ and obtain logarithmic bound for it.
We give the proofs for our main results and Theorem~\ref{thm4.4a} in Section~\ref{sec:pf}.

\section{Preliminaries}		\label{sec:pre}

\subsection{Basic notation and definitions}
We use $X=(x,t)$ to denote a point in $\bR^{n+1}$; $x=(x_1,\ldots, x_n)$ will always be a point in $\bR^n$.
We also write $Y=(y,s)$, $X_0=(x_0,t_0)$, etc.
We define the parabolic distance between the points $X=(x,t)$ and $Y=(y,s)$ in $\bR^{n+1}$ as
\[
\abs{X-Y}_\sP\coloneq \max(\abs{x-y}, \sqrt{\abs{t-s}}),
\]
where $\abs{\,\cdot\,}$ denotes the usual Euclidean norm.
We write $\abs{X}_\sP=\abs{X-0}_\sP$.
For an open set $Q \subset\bR^{n+1}$, we denote
\[
d_X=\dist(X,\partial_p Q)=\inf\bigset{\abs{X-Y}_\sP \colon Y\in \partial_p Q};\quad \inf \emptyset = \infty,
\]
where $\partial_p Q$ denotes the usual parabolic boundary of $Q$.
We use the following notions for basic cylinders in $\bR^{n+1}$:
\begin{align*}
Q^-_r(X)&=B_r(x) \times (t-r^2,t),\\
Q^+_r(X)&=B_r(x) \times (t,t+r^2),\\
Q_r(X)&= B_r(x)\times (t-r^2,t+r^2),
\end{align*}
where $B_r(x)$ is the usual Euclidean ball of radius $r$ centered at $x\in \bR^n$.
We use the notation
\[
\fint_Q u=\frac{1}{\abs{Q}}\int_Q u.
\]
We denote $a\wedge b=\min(a,b)$ and $a \vee b=\max(a,b)$.

\subsection{Function spaces}
For spaces of functions defined on a domain $\Omega\subset \bR^n$, we use the same notation as used in Gilbarg and Trudinger \cite{GT}, while for those defined on a domain $Q\subset \bR^{n+1}$, we borrow notation mainly from Ladyzhenskaya et al. \cite{LSU}.
To avoid confusion, spaces of functions defined on $Q\subset \bR^{n+1}$ will be always written in \emph{script letters} throughout the article.
For $q\ge 1$, we let $\sL_q(Q)$ denote the classical Banach space consisting of measurable functions on $Q$ that are $q$th-integrable.
$\sL_{q,r}(Q)$ is the Banach space consisting of all measurable functions on $Q$ with a finite norm
\[
\norm{u}_{\sL_{q,r}(Q)}=\left(\int_a^b\left(\int_{\Omega} \abs{u(x,t)}^q\, dx\right)^{r/q}dt\right)^{1/r},
\]
where $q\ge 1$ and $r\ge 1$.
$\sL_{q,q}(Q)$ will be denoted by $\sL_q(Q)$.
By $\sC^{\mu,\mu/2}(Q)$ we denote the set of all bounded measurable functions $u$ on $Q$ for which $\abs{u}_{\mu,\mu/2;Q}$ is finite, where
we define the parabolic H\"older norm as follows:
\begin{align*}
\abs{u}_{\mu,\mu/2;Q}&=[u]_{\mu,\mu/2;Q}+\abs{u}_{0;Q}\\
&\coloneq \sup_{\substack{X, Y \in Q\\ X\neq Y}}\frac{\abs{u(X)-u(Y)}}{\abs{X-Y}^\mu_\sP}+\sup_{X\in Q}\abs{u(X)}, \quad \mu\in(0,1].
\end{align*}
We write $u\in \sC^\infty_c(Q)$ (resp. $\sC^\infty_c(\bar Q)$) if $u$ is an infinitely differentiable function on $\bR^{n+1}$ with a compact support in $Q$ (resp. $\bar Q$).
We write $D_i u=D_{x_i} u=\partial u/\partial x_i$ ($i=1,\ldots,n$) and $u_t=\partial u /\partial t$.
We also write $Du=D_x u$ for the column vector $(D_1 u,\ldots, D_n u)^T$.
We write $Q(t_0)$ for the set of all points $(x,t_0)$ in $Q$ and $I(Q)$ for
the set of all $t$ such that $Q(t)$ is nonempty.
We denote
\[
\tri{u}_{Q}^2= \int_{Q}  \abs{D_x u}^2 \,dx \,dt+\esssup\limits_{t\in I(Q)} \int_{Q(t)} \abs{u(x,t)}^2\,dx.
\]
The space $\sW^{1,0}_q(Q)$ denotes the Banach space consisting of functions $u\in \sL_q(Q)$ with weak derivatives $D_i u \in \sL_q(Q)$ ($i=1,\ldots,n$) with the norm
\[
\norm{u}_{\sW^{1,0}_q(Q)}=\norm{u}_{\sL_q(Q)}+\norm{D_x u}_{\sL_q(Q)}
\]
and by $\sW^{1,1}_q(Q)$ the Banach space with the norm
\[
\norm{u}_{\sW^{1,1}_q(Q)}=\norm{u}_{\sL_q(Q)}+\norm{D_x u}_{\sL_q(Q)}+ \norm{u_t}_{\sL_q(Q)}.
\]
In the case when $Q$ has a finite height (i.e., $Q\subset \bR^{n}\times(-T,T)$ for some $T<\infty$), we define $\sV_2(Q)$ as the Banach space consisting of all elements of $\sW^{1,0}_2(Q)$ having a finite norm $\norm{u}_{\sV_2(Q)}\coloneq  \tri{u}_{Q}$ and the space $\sV^{1,0}_2(Q)$ is obtained by completing the set $\sW^{1,1}_2(Q)$ in the norm of $\sV_2(Q)$.
When $Q$ has an infinite height, we say that $u \in \sV_2(Q)$ (resp. $\sV^{1,0}_2(Q)$) if $u \in \sV_2(Q_T)$ (resp. $\sV^{1,0}_2(Q_T)$) for all $T>0$, where $Q_T=Q \cap \set{ \abs{t} <T}$, and $\tri{u}_Q <\infty$.
Note that this definition allows that $1\in \sV^{1,0}_2(\Omega\times(-\infty,\infty))$ when $\abs{\Omega}<\infty$.
Finally, we write $u\in \sL_{q,loc}(Q)$ if $u \in \sL_q(Q')$ for all $Q'\Subset Q$ and similarly define $\sW^{1,0}_{q,loc}(Q)$, etc.
\subsection{Weak solutions}
For $\vec f \in \sL_{2,1}(Q)^N$ and $\vec g^\alpha \in \sL_{2}(Q)^N$ ($\alpha=1,\ldots, n$), we say that $\vec u$ is a weak solution of $\sL \vec u=\vec f-D_\alpha \vec g^\alpha$ in $Q$ if $\vec u \in \sV_2(Q)^N$ and satisfies the identity
\begin{equation}		\label{eqn:E-71}
-\int_{Q} \vec u \cdot \vec \phi_t+ \int_{Q} \sB(\vec u, \vec \phi)= \int_{Q} \vec f \cdot \vec \phi+\int_{Q} \vec g^\alpha \cdot D_\alpha\vec \phi
\end{equation}
for all $\vec \phi \in \sC^\infty_c (Q)^N$, where we set
\[
\sB(\vec u,\vec v)= a^{\alpha\beta}_{ij} D_\beta u^j D_\alpha  v^i.
\]
The adjoint operator $\sL^{*}$ of  $\sL$ is given by
\begin{equation}		\label{eq1.01b}
\sL^{*} \vec u=  -\frac{\partial \vec u}{\partial t} -\frac{\partial}{\partial x_\alpha} \left(A^{\beta\alpha}(x,t)^T \frac{\partial \vec u}{\partial x_\beta}\right).
\end{equation}
Note that the coefficients of \eqref{eq1.01b} also satisfy the ellipticity and boundedness condition \eqref{eq1.03a} with the same constant $\lambda$.
We say that $\vec u$ is a weak solution of $\sL^{*} \vec u=\vec f-D_\alpha \vec g^\alpha$ in $Q$ if $\vec u \in \sV_2(Q)^N$ and satisfies, for all $\vec \phi \in \sC^\infty_c (Q)^N$, the identity
\begin{equation}		\label{eqn:E-71b}
\int_{Q} \vec u \cdot \vec \phi_t+ \int_{Q} \sB(\vec \phi, \vec u)= \int_{Q} \vec f \cdot \vec \phi+ \int_{Q} \vec g^\alpha \cdot D_\alpha\vec \phi.
\end{equation}
Let $\Omega$ be an open set in $\bR^n$ and $\Sigma$ be a relatively open subset of $\partial \Omega$.
Below we denote $Q= \Omega \times (a,b)$ and $S=\Sigma \times (a,b)$, where $-\infty \le a <b \le \infty$.
We say that $\vec u$ is a weak solution of
\[
\sL \vec u = \vec f-D_\alpha \vec g^\alpha \;\text{ in }\;Q,\quad \partial \vec u/\partial \nu = \vec g^\alpha \nu_\alpha \;\text{ on }\;S
\]
if $\vec u\in \sV_2(Q)^N$ and satisfies the identity \eqref{eqn:E-71} for all $\vec \phi \in \sC^\infty_c (Q\cup S)^N$.
Similarly, we say that $\vec u$ is a weak solution of
\[
\sL^{*} \vec u = \vec f-D_\alpha \vec g^\alpha\;\text{ in }\;Q,\quad \partial \vec u/\partial \nu^{*} =\vec g^\alpha \nu_\alpha \;\text{ on }\; S
\]
if $\vec u\in \sV_2(Q)^N$ and satisfies the identity \eqref{eqn:E-71b} for all $\vec \phi \in \sC^\infty_c (Q \cup S)^N$.
For $\vec \psi \in L^2(\Omega)^N$ and $a\neq -\infty$, we say that $\vec u$ is a weak solution in $\sV^{1,0}_2(Q)^N$ of the problem
\[
\left\{
\begin{aligned}
\sL\vec u=\vec f-D_\alpha \vec g^\alpha \quad &\text{in }\; Q\\
\partial \vec u/\partial \nu= \vec g^\alpha \nu_\alpha \quad&\text{on }\; \partial_p Q \\
\vec u(\cdot, a)=\vec \psi \quad&\text{on }\; \Omega
\end{aligned}
\right.
\]
if $\vec u \in \sV^{1,0}_2(Q)^N$ and satisfies the identity
\[
-\int_{Q} \vec u \cdot \vec \phi_t+ \int_{Q} \sB(\vec u, \vec \phi)- \int_{Q} \vec f \cdot \vec \phi-\int_{Q} \vec g^\alpha \cdot D_\alpha\vec \phi=\int_\Omega \vec \psi\cdot \vec \phi (\cdot,a)
\]
for all $\vec \phi \in \sC^\infty_c(\bar Q)^N$ that is equal to zero for $t=b$.
For $b\neq \infty$, we say that $\vec u$ is a weak solution in $\sV^{1,0}_2(Q)^N$ of the problem
\[
\left\{
\begin{aligned}
\sL^{*} \vec u=\vec f-D_\alpha \vec g^\alpha \quad &\text{in }\; Q\\
\partial \vec u/\partial \nu^{*}= \vec g^\alpha \nu_\alpha \quad&\text{on }\; \partial_p Q \\
\vec u(\cdot, b)=\vec \psi \quad&\text{on }\; \Omega.
\end{aligned}
\right.
\]
if $\vec u \in \sV^{1,0}_2(Q)^N$ and satisfies the identity
\[
\int_{Q} \vec u \cdot \vec \phi_t+ \int_{Q} \sB(\vec \phi, \vec u)- \int_{Q} \vec f \cdot \vec \phi- \int_{Q} \vec g^\alpha \cdot D_\alpha\vec \phi=\int_\Omega \vec \psi\cdot \vec \phi (\cdot,b)
\]
for all $\vec \phi \in \sC^\infty_c(\bar Q)^N$ that is equal to zero for $t=a$.

\subsection{Neumann Green's function}	\label{sec:ng}
Let  $\cQ \coloneq \Omega\times (-\infty,\infty)$, where $\Omega$ is a domain in $\bR^n$.
We say that an $N\times N$ matrix valued function $\vec N(X,Y)=\vec N (x,t,y,s)$, with measurable entries
\[
N_{ij} \colon \cQ\times \cQ \to  [-\infty, \infty],
\]
is Neumann Green's function of $\sL$ in $\cQ$ if it satisfies the following properties:
\begin{enumerate}[a)]
\item
For all $Y\in \cQ$,  we have $\vec N(\cdot,Y)\in \sW^{1,0}_{1,loc}(\cQ)^{N^2}$ and  $\vec N (\cdot,Y) \in \sV_{2}(\cQ \setminus Q_r(Y))^{N^2}$ for any $r>0$.

\item
For all $Y\in \cQ$, we have
$\sL \vec N(\cdot,Y) = \delta_Y \vec I$ in $\cQ$, $\frac{\partial}{\partial \nu} \vec N(\cdot,Y) =0$ on $\partial_p \cQ$, in the sense that for any $\vec \phi \in \sC^\infty_c(\bar \cQ)^N$, we have
\begin{equation}		\label{eq2.04x}
\int_{\cQ}\left( -N_{ik}(\cdot,Y)\phi^i_t+ a^{\alpha\beta}_{ij} D_\beta N_{jk}(\cdot,Y) D_\alpha \phi^i \right) = \phi^k(Y).
\end{equation}
\item
For any $\vec f=(f^1,\ldots, f^N)^T \in \sC^\infty_c(\bar \cQ)^N$, the function $\vec u$ given by
\[
\vec u(X)\coloneq \int_\cQ \vec N(Y,X)^T \vec f(Y)\,dY
\]
is a weak solution in $\sV^{1,0}_2(\cQ)^N$ of $\sL^{*} \vec u=\vec f$ in $\cQ$, $\partial \vec u/\partial \nu^{*} =0$ on $\partial_p \cQ$.
\end{enumerate}
We note that part c) of the above definition gives the uniqueness of a Neumann Green's function.

\section{Main results}			\label{sec:main}
We make the following assumptions to construct the Neumann Green's function in $\cQ \coloneq \Omega\times (-\infty,\infty)$.
\begin{enumerate}[{\bf{A}1.}]
\item
We assume that $C^\infty_c(\bar \Omega)$ is dense in $W^{1,2}(\Omega)$ and there exists a constant $\gamma$ such that for all $u \in \tilde{W}^{1,2}(\Omega)$, we have
\begin{equation}			\label{eq:mei}
\norm{u}_{L^{2(n+2)/n}(\Omega)} \le \gamma \norm{D u}_{L^2(\Omega)}^{n/(n+2)}  \norm{u}_{L^2(\Omega)}^{2/(n+2)},
\end{equation}
where we use the notation
\[
\tilde{W}^{1,2}(\Omega)\coloneq 
\begin{cases}
W^{1,2}(\Omega) &\mbox{if } \abs{\Omega}=\infty\\
\Set{u\in W^{1,2}(\Omega) \colon \int_\Omega u=0 } & \mbox{if }  \abs{\Omega}<\infty. 
\end{cases} 
\]
\item
There exist $\mu_0\in(0,1]$, $R_c \in (0,\infty]$, and $B_0>0$ such that if $\vec u$ is a weak solution of $\sL\vec u= 0$ (resp. $\sL^{*} \vec u=0$) in $Q=Q^{-}_R(X)$ (resp. $Q=Q^{+}_R(X)$),  where $X\in \cQ$ and $0<R< \bar d_X\coloneq d_X\wedge R_c$, then $\vec u$ is H\"older continuous in $\frac{1}{2}Q=Q^{-}_{R/2}(X)$ (resp. $\frac{1}{2}Q=Q^{+}_{R/2}(X)$) with an estimate
\begin{equation}	\label{IH}
[\vec u]_{\mu_0,\mu_0/2; \frac{1}{2}Q}\le B_0R^{-\mu_0}\left(\fint_{Q}\,\abs{\vec u(Y)}^2\,dY \right)^{1/2}.
\end{equation}
\end{enumerate}

\begin{remark}			\label{rmk3.3k}
In the case when $n\ge 3$, the inequality 
\begin{equation}			\label{eq3.04ktx}
\norm{u}_{L^{2n/(n-2)}(\Omega)} \le C \norm{D u}_{L^2(\Omega)}
\end{equation}
implies (via H\"older's inequality) the inequality \eqref{eq:mei} and also the inequality
\begin{equation}			\label{eq3.05ktx}
\norm{u}_{L^2(\Omega)} \le C \norm{D u}_{L^2(\Omega)}^{n/(n+2)} \norm{u}_{L^1(\Omega)}^{2/(n+2)}.
\end{equation}
If $\Omega=\bR^n$, then \eqref{eq3.04ktx} and \eqref{eq3.05ktx} are the Sobolev inequality and the Nash inequality respectively.
In fact, they are equivalent in that case; see the monograph by Saloff-Coste \cite{SC}.
From this point of view, one may think A1 roughly means that $\Omega$ behaves like $\bR^n$ with respect to $W^{1,2}$ functions.
\end{remark}
\begin{remark}
The inequality \eqref{eq:mei} implies the following:
If $u \in \sV_2(\Omega\times (a,b))$ is such that $u(\cdot, t) \in \tilde W^{1,2}(\Omega)$ for a.e. $t \in (a,b)$, where $-\infty\le a <b\le \infty$, then we have (see \cite[pp. 74--75]{LSU})
\begin{equation}			\label{eq5.13ddd}
\norm{u}_{\sL_{2(n+2)/n}(\Omega\times(a,b))} \le \gamma  \tri{u}_{\Omega\times(a,b)}.
\end{equation}
Another important consequence of \eqref{eq:mei} is that for $u\in \sV_2(\Omega\times(a,b))$ with $b-a<\infty$, we have
\begin{equation}			\label{paraemb}
\norm{u}_{\sL_{2(n+2)/n}(\Omega\times(a,b))} \le \left(2\gamma+(b-a)^{n/2}\abs{\Omega}^{-1}\right)^{\frac{1}{n+2}} \tri{u}_{\Omega\times(a,b)}.
\end{equation}
We refer to \cite[Eq.~(3.8), p. 77]{LSU} for the proof of \eqref{paraemb}. 
\end{remark}

\begin{theorem}		\label{EP.thm1}
Assume the conditions A1 and A2.
Then there exists a unique Neumann Green's function $\vec N(X,Y)=\vec N(x,t,y,s)$ of $\sL$ in $\cQ\coloneq \Omega\times (-\infty,\infty)$.
It is continuous in $\set{(X,Y) \in \cQ\times \cQ \colon X\neq Y}$ and satisfies $\vec N(x,t,y,s)=0$ for $t<s$.
For any $\vec f =(f^1,\ldots, f^N)^T \in \sC^\infty_c(\bar \cQ)^N$, the function $\vec u$ given by
\begin{equation}			\label{eq3.04c}
\vec u(X)\coloneq \int_{\cQ} \vec N(X,Y) \vec f(Y)\,dY
\end{equation}
is a weak solution in $\sV^{1,0}_2(\cQ)^N$ of
\begin{equation}			\label{eq3.05a}
\sL \vec u = \vec f\;\text{ in }\;\cQ,\quad \partial \vec u/\partial \nu =0\;\text{ on }\;\partial_p \cQ.
\end{equation}
Moreover, for all $\vec \psi =(\psi^1,\ldots,\psi^N)^T \in L^2(\Omega)^N$, the function given by
\begin{equation}		\label{EP.1b}
\vec u(x,t)=\int_\Omega \vec N (x,t,y,s) \vec \psi(y)\,dy
\end{equation}
is a unique weak solution in $\sV^{1,0}_2(\Omega\times(s,\infty))^N$ of the problem
\begin{equation}		\label{EP.1b1}			
\left\{
\begin{aligned}
\sL\vec u= 0\quad &\text{in }\; \Omega\times(s,\infty)\\
\partial \vec u/\partial \nu= 0 \quad&\text{on }\; \partial\Omega\times(s,\infty)\\
\vec u(\cdot, s)=\vec \psi \quad&\text{on }\; \Omega.
\end{aligned}
\right.
\end{equation}
Furthermore, for $X, Y \in \cQ$ satisfying $0<\abs{X-Y}_{\sP} < \frac{1}{2} \bar d_Y$, we have
\begin{equation}		\label{eq3.07c}
\abs{\vec N(X,Y)} \le C \abs{X-Y}_{\sP}^{-n},
\end{equation}
and for $X, X', Y\in \cQ$ satisfying $2\abs{X-X'}_{\sP}<\abs{X-Y}_{\sP}<\frac{1}{2} \bar d_Y$, 
\begin{equation}		\label{eq3.08d}
\abs{\vec N(X,Y)-\vec N(X',Y)}\le C\abs{X-X'}_{\sP}^{\mu_0} \abs{X-Y}_{\sP}^{-n-\mu_0},
\end{equation}
where $C=C(n,N,\lambda,\mu_0, B_0)$.
\end{theorem}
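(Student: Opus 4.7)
The plan is to construct $\vec N(\cdot,Y)$ as a locally uniform limit of approximating Green's functions, in the spirit of Hofmann--Kim and Cho--Dong--Kim adapted to the Neumann setting. For $Y=(y,s)\in\cQ$ and small $\epsilon>0$, let $\vec N^\epsilon(\cdot,Y)\in\sV^{1,0}_2(\Omega\times(s,\infty))^{N^2}$ be the weak solution of the initial-Neumann problem for $\sL$ with initial datum $|B_\epsilon(y)\cap\Omega|^{-1}\chi_{B_\epsilon(y)\cap\Omega}\,\vec I$ at $t=s$, extended by zero for $t<s$; existence follows from the standard Galerkin scheme, using A1 to secure a suitable basis of admissible test functions and the parabolic multiplicative embedding \eqref{paraemb} to close the energy estimates. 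The defining properties of $\vec N$ are then extracted by passing to the limit $\epsilon\to 0$ after obtaining uniform-in-$\epsilon$ pointwise bounds away from the pole.

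The heart of the argument is the uniform-in-$\epsilon$ bound $|\vec N^\epsilon(X,Y)|\le C|X-Y|_\sP^{-n}$ whenever $|X-Y|_\sP<\bar d_Y/2$. Fix such an $X_0$, set $r:=|X_0-Y|_\sP$ and assume $\epsilon\ll r$. First derive an $\sL_1$-average bound by duality: for each $k\in\{1,\ldots,N\}$ and each $\vec\eta\in\sC^\infty_c(\overline{Q^-_{r/4}(X_0)})^N$, let $\vec v$ solve the backward adjoint Neumann problem $\sL^{*}\vec v=\vec\eta$ in $\cQ$ with $\vec v\equiv 0$ past a large terminal time. A time integration by parts pairing the weak forms of $\sL\vec N^\epsilon(\cdot,Y)\vec e_k=0$ and $\sL^{*}\vec v=\vec\eta$ (with Neumann boundary terms cancelling and the initial/terminal data producing the only surviving boundary contribution) yields
\[
\int_\cQ \vec\eta\cdot\vec N^\epsilon(\cdot,Y)\vec e_k = \fint_{B_\epsilon(y)\cap\Omega} v^k(\cdot,s).
\]
Since $\vec\eta$ vanishes near $Y$, we have $\sL^{*}\vec v=0$ on $Q^+_{r/2}(Y)$; applying A2 there, combined with the standard backward energy estimate $\tri{\vec v}_\cQ^2\le C\|\vec\eta\|_{\sL_{2,1}}^2$, gives $|\vec v|\le Cr^{-n/2}\|\vec\eta\|_{\sL_{2,1}}$ on $Q^+_{r/4}(Y)$. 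Taking $\vec\eta$ to be a normalized indicator of $Q^-_{r/4}(X_0)$ then yields $\fint_{Q^-_{r/4}(X_0)}|\vec N^\epsilon(\cdot,Y)|\le Cr^{-n}$, and a Moser-type bootstrap using A2 on shrinking sub-cylinders of $Q^-_{r/2}(X_0)$ (where $\sL\vec N^\epsilon(\cdot,Y)\vec e_k=0$) upgrades this $\sL_1$-average to the pointwise estimate \eqref{eq3.07c}; one final application of A2 delivers the H\"older bound \eqref{eq3.08d}.

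With these uniform estimates, Arzel\`a--Ascoli extracts a locally uniformly convergent subsequence $\vec N^\epsilon\to\vec N(\cdot,Y)$ on $\{X\ne Y\}$, continuous and satisfying \eqref{eq3.07c}--\eqref{eq3.08d}, with the vanishing $\vec N(x,t,y,s)=0$ for $t<s$ inherited from the approximations. I would then verify properties (a)--(c) of Section~\ref{sec:ng}: (a) follows from a uniform $\sV_2$-bound on $\cQ\setminus Q_r(Y)$ obtained by running the energy identity for $\vec N^\epsilon$ past $t=s+r^2/4$ with the pointwise bound as initial data; (b) is obtained by passing to the limit in the weak form of $\sL\vec N^\epsilon(\cdot,Y)\vec e_k=0$, using that the approximating initial datum converges to $\delta_y\vec I$ in the sense of measures. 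The representation formula \eqref{eq3.04c} is then a direct consequence of (b) via Fubini. For (c), I would perform the analogous construction for $\sL^{*}$ to produce its Neumann Green's function $\vec N^{*}$, and establish the reciprocity $\vec N^{*}(X,Y)=\vec N(Y,X)^T$ via an approximate symmetry identity obtained by pairing $\vec N^\epsilon$ against $\vec N^{*,\epsilon'}$ on the overlap $\Omega\times(s,s')$ (with Neumann boundary terms vanishing) and then letting $\epsilon,\epsilon'\to 0$. The $L^2$-initial-data statement \eqref{EP.1b1}--\eqref{EP.1b} then follows by combining the representation with a density argument based on A1 and standard parabolic energy uniqueness; uniqueness of $\vec N$ itself is immediate from (c) with $\vec f$ ranging over $\sC^\infty_c(\bar\cQ)^N$.

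The main obstacle is the duality-plus-bootstrap step, in which the pointwise bound on the auxiliary solution $\vec v$ near $Y$ must be carefully extracted from A2 layered atop a backward adjoint energy estimate whose source lies only in $\sL_{2,1}$, and the resulting $\sL_1$-average bound on $\vec N^\epsilon$ must then be promoted to $\sL_\infty$ by a Moser-type iteration driven solely by A2. A secondary subtlety is the approximate reciprocity $\vec N^{*,\epsilon'}(Y,X)=\vec N^\epsilon(X,Y)^T$: the two approximations are nontrivial only on $\{t>s\}$ and $\{t<s'\}$ respectively, so the Green's identity must be run over the overlap $s<t<s'$, and one must verify that the Neumann boundary contribution vanishes — which requires the test-function class to reach up to $\bar\Omega$, not merely $\Omega$.
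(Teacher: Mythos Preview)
Your outline is correct and follows essentially the same strategy as the paper: construct mollified Green's functions $\vec N^\epsilon(\cdot,Y)$ via the Neumann initial-value problem, pair them against a backward adjoint solution to obtain a uniform $\sL_1$ bound on a small cylinder, use A2 to upgrade this to the pointwise and H\"older estimates \eqref{eq3.07c}--\eqref{eq3.08d}, extract a limit, and verify the defining properties (including \eqref{eq3.04c} and \eqref{EP.1b}) via reciprocity with the analogously constructed $\vec N^{*}$. The paper's execution differs only in minor technical choices: it uses a smooth mollifier rather than an indicator, centers the duality cylinder $Q^+_R(X_0)$ so as to contain \emph{both} $X$ and $Y$ (invoking an inhomogeneous $\sL_\infty$ estimate from \cite[Section~3.2]{CDK} rather than applying A2 on the homogeneous region near $Y$ as you do), and passes to the limit via weak $\sW^{1,0}_q$-compactness---obtained from uniform $\sL_p$ bounds on $D_x\vec N^\epsilon$ near the pole (Lemma~\ref{wse.lem1})---rather than Arzel\`a--Ascoli, the former being needed anyway to justify the limit in the gradient term of \eqref{eq2.04x} where the test function does not vanish at $Y$.
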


\begin{remark}			\label{rmk3.8}
In the proof of Theorem~\ref{EP.thm1} we will see that for any $Y\in \cQ$ and $0<R<\bar d_Y$, we have
\begin{enumerate}[{\em i)}]
\item
$\tri{{\vec N}(\cdot,Y)}_{\cQ\setminus \bar{Q}_R(Y)}\le CR^{-n/2}$.
\item
$\norm{\tilde{\vec N}(\cdot,Y)}_{\sL_{2(n+2)/n}(\cQ\setminus \bar Q_R(Y))}\le C_\gamma R^{-n/2}$, 
\item
$\bigabs{\bigset{X\in\cQ \colon \abs{\tilde{\vec N}(X,Y)}>\tau}} \le C_\gamma \tau^{-\frac{n+2}{n}}, \quad \forall \tau>\bar d_Y^{-n}$.
\item
$\bigabs{\bigset{X\in \cQ \colon \abs{D_x \vec N(X,Y)}>\tau}}\le C \tau^{-\frac{n+2}{n+1}},  \quad \forall \tau> \bar d_Y^{-(n+1)}$.
\item
$\norm{\vec N(\cdot,Y)}_{\sL_p(Q_R(Y))}\le C_{p,\gamma} R^{-n+(n+2)/p},  \quad \forall p\in [1,\tfrac{n+2}{n})$.
\item
$\norm{D_x \vec N(\cdot,Y)}_{\sL_p(Q_R(Y))} \le C_{p} R^{-n-1+(n+2)/p},  \quad \forall p\in[1,\tfrac{n+2}{n+1})$.
\end{enumerate}
Here, we denote $\tilde{\vec N}(\cdot,Y)=\vec N(\cdot,Y)-\frac{1}{\abs{\Omega}} H(t-s)\vec I$, where $H(t)=1_{[0,\infty)}(t)$ and we use the convention $1/\infty =0$.
\end{remark}

\begin{remark}
If $\Omega\subset \bR^n$ is such that there exists a bounded trace operator from $ W^{1,2}(\Omega)$ to $L^2(\Omega)$, then it can be shown that
for $\vec f \in \sC^\infty_c(\bar \Omega\times [a,b])^N$ and $\vec g \in \sC^\infty_c(\partial\Omega \times [a,b])^N$, where $-\infty<a<b<\infty$, $\vec u$ defined by
\[
\vec u(X)=\int_a^b\!\!\!\int_\Omega \vec N(X,Y)\vec f(Y)\,dY+\int_a^b\!\!\!\int_{\partial\Omega} \vec N(X,Y)\vec g(Y) \,dS_Y
\]
is a unique weak solution in $\sV^{1,0}_2(\Omega\times (a,b))^N$ of the problem
\[
\left\{
\begin{aligned}
\sL\vec u= \vec f\quad &\text{in }\; \Omega\times (a,b)\\
\partial \vec u/\partial \nu= \vec g \quad&\text{on }\; \partial\Omega\times(a,b)\\
\vec u(\cdot, a)=0 \quad&\text{on }\; \Omega.
\end{aligned}
\right.
\]
\end{remark}

The following ``local boundedness" assumption is used to obtain global Gaussian estimates for the Neumann Green's function.
\begin{enumerate}[{\bf{A}1.}]
\setcounter{enumi}{2}
\item
There exist $R_{M}\in (0,\infty]$, $B_1>0$ such that if $\vec u$ is a weak solution of
\begin{align*}
\sL \vec u = 0 \text{ in } Q=Q_R^{-}(X)\cap \cQ&,\quad \partial \vec u/\partial \nu =0 \text{ on } S=Q_R^{-}(X)\cap \partial_p \cQ,\\
(\text{resp. } \sL^{*} \vec u = 0 \text{ in } Q=Q_R^{+}(X)\cap \cQ&,\quad \partial \vec u/\partial \nu^{*} =0 \text{ on } S=Q_R^{+}(X)\cap \partial_p \cQ,)
\end{align*}
where $X\in \cQ$ and $0<R<R_M$, then $\vec u$ is bounded in $\frac{1}{2}Q= Q^{-}_{R/2}(X)\cap \cQ$ (resp.  $\frac{1}{2}Q= Q^{+}_{R/2}(X)\cap \cQ$) and we have an estimate
\begin{equation}		\label{LB}
\norm{\vec u}_{\sL_\infty(\frac{1}{2}Q)}\le B_1 R^{-(n+2)/2} \norm{\vec u}_{\sL_2(Q)}.
\end{equation}
\end{enumerate}

\begin{remark}
Note that A3 implies that there exists $\theta >0$ such that for all $x\in \Omega$ and $0<R<R_M$, we have
\begin{equation}		\label{ogr}
\theta R^n \le \abs{\Omega \cap B_R(x)},
\end{equation}
which follows from \eqref{LB} by taking $\vec u = \vec 1$ and $\theta=1/B_1^2$.
In particular, we should have $R_M <\infty$ in the case when $\abs{\Omega}<\infty$.
In fact, we may take $R_M=\abs{\Omega}^{1/n}$ in that case without loss of generality.
\end{remark}

\begin{theorem}		\label{GE.thm1}
Assume that the conditions A1-A3.
Then, for $t>s$, we have the Gaussian bound for the Neumann Green's function
\begin{equation}		\label{GE.thm1.1}
\abs{\vec N(x,t,y,s)}\le \frac{C}{\left\{(t-s)\wedge R_M^2\right\}^{n/2}}\exp\left(\frac{-\kappa\abs{x-y}^2}{t-s}\right),
\end{equation}
where $C=C(n, N, \lambda, B_1)$ and $\kappa=\kappa(\lambda)>0$.
In particular, the estimate \eqref{GE.thm1.1} implies that for $0<t-s \le T$, we have
\begin{equation}		\label{eq2.17dx}
\abs{\vec N(x,t,y,s)}\le \frac{C}{(t-s)^{n/2}}\exp\left(\frac{-\kappa\abs{x-y}^2}{t-s}\right),
\end{equation}
where $C=C(n,N,\lambda,B_1,T)$.
\end{theorem}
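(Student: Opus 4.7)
The plan is to carry out a Davies-type perturbation argument: derive a weighted $L^2$ energy contraction, upgrade it to a weighted $L^2\to L^\infty$ bound using A3, compose two such bounds at the midpoint to obtain a kernel $L^1\to L^\infty$ estimate with an exponential loss in the weight, and finally optimize the weight to produce the Gaussian factor.

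\textbf{Weighted energy and local boundedness.} For any bounded Lipschitz $\phi=\phi(x)$ on $\Omega$ with $\|\nabla\phi\|_{L^\infty}\le\alpha$, and $\vec u$ the weak solution of \eqref{EP.1b1} with $L^2$ datum $\vec f$ at time $s$, I test the weak formulation with $e^{2\phi}\vec u$ (admissible since $\phi$ depends only on $x$, so the Neumann condition is preserved; limited time regularity is handled by Steklov averaging). Ellipticity \eqref{eq1.03a} combined with Young's inequality on the cross term $2a^{\alpha\beta}_{ij}e^{2\phi}u^iD_\beta u^jD_\alpha\phi$ gives
\[
\tfrac{d}{d\tau}\|e^\phi\vec u(\cdot,\tau)\|_{L^2}^2\le 2\nu\alpha^2\|e^\phi\vec u(\cdot,\tau)\|_{L^2}^2,\qquad \nu=\nu(\lambda),
\]
so by Gronwall $\|e^\phi\vec u(\cdot,t)\|_{L^2}\le e^{\nu\alpha^2(t-s)}\|e^\phi\vec f\|_{L^2}$. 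To upgrade this to a pointwise bound, fix $(x,t)$, set $R=\tfrac12\sqrt{(t-s)\wedge R_M^2}$, and apply A3 on $Q_R^-(x,t)\cap\cQ$ to obtain $|\vec u(x,t)|\le B_1R^{-(n+2)/2}\|\vec u\|_{\sL_2(Q_R^-(x,t)\cap\cQ)}$. Since $\phi$ is $\alpha$-Lipschitz, $e^{-2\phi(z)}\le e^{-2\phi(x)+2\alpha R}$ on $B_R(x)$, so the slice-$L^2$ norm in $z$ is controlled by $e^{-2\phi(x)+2\alpha R}\|e^\phi\vec u(\cdot,\tau)\|_{L^2}^2$. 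Applying the contraction above, integrating in $\tau$, and absorbing $e^{\alpha R}\le e^{1/4+\alpha^2(t-s)/4}$ by AM-GM yields
\[
e^{\phi(x)}|\vec u(x,t)|\le\frac{C\,e^{\kappa_0\alpha^2(t-s)}}{((t-s)\wedge R_M^2)^{n/4}}\|e^\phi\vec f\|_{L^2},
\]
with $C=C(\lambda,B_1)$ and $\kappa_0=\kappa_0(\lambda)$. The identical reasoning for $\sL^*$ gives the corresponding bound on the backward propagator.

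\textbf{Composition and optimization.} Denote the forward Neumann propagator by $T(t,s)$ and set $t_m=(s+t)/2$. By the uniqueness in Theorem~\ref{EP.thm1}, $T(t,s)=T(t,t_m)T(t_m,s)$, so the twisted operator factors as $e^\phi T(t,s)e^{-\phi}=\bigl(e^\phi T(t,t_m)e^{-\phi}\bigr)\bigl(e^\phi T(t_m,s)e^{-\phi}\bigr)$. The first factor is $L^2\to L^\infty$ bounded by the previous paragraph, while the $L^1\to L^2$ norm of the second factor equals, by duality, the $L^2\to L^\infty$ norm of $\bigl(e^\phi T(t_m,s)e^{-\phi}\bigr)^*=e^{-\phi}T^*(s,t_m)e^\phi$, which is again controlled by the previous paragraph (with weight $-\phi$, also $\alpha$-Lipschitz). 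Consequently,
\[
\|e^\phi T(t,s)e^{-\phi}\|_{L^1\to L^\infty}\le\frac{C\,e^{\kappa_0\alpha^2(t-s)}}{((t-s)\wedge R_M^2)^{n/2}};
\]
since the $L^1\to L^\infty$ norm of an integral operator with matrix-valued kernel equals the $L^\infty$ norm of the kernel, this reads $|e^{\phi(x)-\phi(y)}\vec N(x,t,y,s)|\le Ce^{\kappa_0\alpha^2(t-s)}((t-s)\wedge R_M^2)^{-n/2}$. Taking $\phi(z)=\alpha\eta\cdot z$ with $\eta=(x-y)/|x-y|$ makes $\phi(x)-\phi(y)=\alpha|x-y|$; optimizing at $\alpha=|x-y|/(2\kappa_0(t-s))$ produces \eqref{GE.thm1.1} with $\kappa=1/(4\kappa_0)=\kappa(\lambda)$. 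Estimate \eqref{eq2.17dx} follows from \eqref{GE.thm1.1} by a case split on whether $t-s\le R_M^2$ or $R_M^2<t-s\le T$, with the constant in the latter regime absorbing the factor $(T/R_M^2)^{n/2}$.

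\textbf{Main obstacle.} The principal technical issue is the rigorous use of the \emph{unbounded} linear weight $\phi(z)=\alpha\eta\cdot z$ in the Davies scheme. I would establish everything first for the bounded truncation $\phi_M=\max(-M,\min(\phi,M))$, which remains $\alpha$-Lipschitz so the argument above goes through verbatim, and then pass to $M\to\infty$ using that all constants are $M$-independent. A secondary subtlety is that the composition in the previous paragraph is carried out at the operator (rather than kernel) level, which neatly avoids having to make pointwise sense of $\vec N(\cdot,t_m,y,s)$ as an $L^2$ object arising from Dirac initial data.
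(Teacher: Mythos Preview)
Your proposal is correct and follows essentially the same Davies-type perturbation scheme as the paper: weighted $L^2$ contraction, upgrade to $L^2\to L^\infty$ via A3, duality for the $L^1\to L^2$ bound on the adjoint, composition at the midpoint, and optimization over the weight. The only cosmetic difference is the choice of weight: the paper works directly with the bounded Lipschitz function $\psi(z)=M\psi_0(|z-y|)$ (the distance to $y$ truncated at $|x-y|$), which avoids your truncation limit entirely, and it tracks the cross term $e^{\rho M}$ explicitly rather than absorbing it by AM--GM; otherwise the arguments are interchangeable.
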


Finally, the next theorem says that the converse of Theorem \ref{GE.thm1} is also true, and thus that the condition A3 is equivalent to a global bound \eqref{GE.thm1.1} for the Neumann Green's function.
\begin{theorem}		\label{GE.thm2}
Assume that the conditions A1 and A2.
Suppose there exist constants $B_2$ and $\kappa>0$ such that the Neumann Green's function has the bound
\begin{equation}		\label{GE.thm2.1}	
\abs{\vec N(x,t,y,s)}\le  \frac{B_2}{\left\{(t-s)\wedge R_M^2\right\}^{n/2}} \exp\left(\frac{-\kappa\abs{x-y}^2}{t-s}\right),
\end{equation}
for all $t<s$ and $x, y\in\Omega$.
Then the condition A3 is satisfied with the same $R_M$ and $B_1$ depending on $n,N,\lambda,\kappa$, and $B_2$.
\end{theorem}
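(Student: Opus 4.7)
The strategy is to combine a Neumann Green's function representation with carefully chosen cutoffs so that all source terms are supported at parabolic distance $\gtrsim R$ from the target point, where the Gaussian bound \eqref{GE.thm2.1} forces strong decay. Fix $X = (x_*, t_*) \in \cQ$, $0 < R < R_M$, and a weak solution $\vec u \in \sV_2(Q)^N$ of $\sL\vec u = 0$ in $Q = Q_R^-(X) \cap \cQ$ with $\partial\vec u/\partial\nu = 0$ on $S = Q_R^-(X) \cap \partial_p \cQ$; we want to bound $|\vec u(X_0)|$ for arbitrary $X_0 = (x_0, t_0) \in \tfrac{1}{2}Q$. I would introduce cutoffs $\zeta(x) \equiv 1$ on $B_{3R/4}(x_*)$ with $\supp\zeta \subset B_{7R/8}(x_*)$, $|D\zeta| \le C/R$, and $\chi(t) \equiv 1$ on $(t_*-R^2/2, t_*)$ with $\supp\chi \subset (t_*-7R^2/8, t_*)$, $|\chi'| \le C/R^2$; then $\zeta(x_0)\chi(t_0) = 1$, while at every $Y = (y, s)$ with $D\zeta(y) \ne 0$ or $\chi'(s) \ne 0$ one has either $|x_0 - y| \ge R/4$ or $t_0 - s \ge R^2/4$.

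Setting $\vec w := \zeta\chi\vec u$ (extended by zero), a standard computation yields that $\vec w \in \sV^{1,0}_2(\cQ)^N$ is a weak solution of $\sL\vec w = \vec f - D_\alpha\vec g^\alpha$, $\partial\vec w/\partial\nu = \vec g^\alpha\nu_\alpha$ on $\partial_p\cQ$, with
\[
\vec f = \chi'\zeta\vec u - \chi(D_\alpha\zeta)A^{\alpha\beta}D_\beta\vec u, \qquad \vec g^\alpha = \chi A^{\alpha\beta}(D_\beta\zeta)\vec u.
\]
Using part~c) of the Neumann Green's function definition applied to the adjoint problem, with the delta source at $X_0$ approximated by smooth test data, I would derive the representation
\[
\vec u(X_0) = \vec w(X_0) = \int_\cQ \vec N(X_0, Y)\vec f(Y)\,dY + \int_\cQ D_{y,\alpha}\vec N(X_0, Y)\,\vec g^\alpha(Y)\,dY.
\]

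Each integral would then be bounded by $CR^{-(n+2)/2}\|\vec u\|_{\sL_2(Q)}$. On $\supp(\chi'\zeta\vec u)$ one has $t_0 - s \ge R^2/4$, so \eqref{GE.thm2.1} (using $R \le R_M$) gives $|\vec N(X_0, Y)| \le CR^{-n}$, and Cauchy-Schwarz with $|\chi'|\le C/R^2$ closes that piece. On the remaining cutoff support where $|x_0 - y| \ge R/4$, a direct Gaussian integration (polar coordinates in $y$, change of variable $u = R^2/(t_0-s)$) yields $\|\vec N(X_0, \cdot)\|_{\sL_2} \le CR^{1-n/2}$, and a parabolic Caccioppoli for $\vec N(X_0,\cdot)^T$ (a weak adjoint solution away from the pole, satisfying zero Neumann data) upgrades this to $\|D_y\vec N(X_0, \cdot)\|_{\sL_2} \le CR^{-n/2}$. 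Combined with an interior/Neumann Caccioppoli for $\vec u$ on a slight enlargement of the cutoff support---which stays strictly inside $Q \cup S$ because $\supp D\zeta$ and $\supp\chi'$ are a definite distance from $\partial_p Q \setminus S$---giving $\|D\vec u\|_{\sL_2} \le CR^{-1}\|\vec u\|_{\sL_2(Q)}$, the remaining pieces also yield the required bound.

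The main technical obstacle is the rigorous justification of the representation formula, since the sources $\vec f$ and $\vec g^\alpha$ are merely $\sL_2$ while $\vec N$ has a non-integrable singularity at the diagonal. This is resolved by approximating the delta source in the adjoint problem by smooth data, using the off-diagonal continuity of $\vec N$ (Theorem~\ref{EP.thm1}) and the local estimates of Remark~\ref{rmk3.8} to pass to the limit---feasible because the sources are supported at parabolic distance $\gtrsim R$ from $X_0$. A secondary point requiring care is the Caccioppoli estimate for $D_y\vec N$, which rests on identifying $\vec N(X_0, \cdot)^T$ as a weak solution of $\sL^*$ in $\cQ\setminus\{X_0\}$ with zero Neumann data on $\partial\Omega$.
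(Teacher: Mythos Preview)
Your proposal is correct and follows essentially the same route as the paper: localize the solution by a cutoff, derive a Green's function representation for the cut-off solution (the paper does this via the mollified Green's function $\vec N^\epsilon(\cdot,Y)$ and passage to the limit, which is exactly the approximation step you flag), and then estimate the three resulting integrals using the pointwise bound $\abs{\vec N(X,Y)}\le C\abs{X-Y}_\sP^{-n}$ together with the energy/Caccioppoli control $\tri{\vec N(\cdot,Y)}_{\cQ\setminus \bar Q_r(Y)}\le Cr^{-n/2}$. The only cosmetic differences are that the paper uses a single space--time cutoff $\eta$ (rather than your product $\zeta\chi$), works on the $\sL^{*}$ side so that $\vec N(\cdot,Y)$ appears with derivatives in the first variable, and proves the bound on $\tfrac14 Q$ before invoking a covering argument.
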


\section{Applications}			\label{sec:apps}
\subsection{Examples}
\subsubsection{Examples for A1}
First, we give examples of domains satisfying the condition A1.
\begin{enumerate}[(1)]
\item
We say that $\Omega$ is an extension domain (for $W^{1,2}$ functions) if there exists a linear operator $E \colon W^{1,2}(\Omega) \to W^{1,2}(\bR^n)$ such that
\begin{equation}		\label{extop}
\norm{Eu}_{L^2(\bR^n)} \le C \norm{u}_{L^2(\Omega)},\quad
\norm{Eu}_{W^{1,2}(\bR^n)} \le C \norm{u}_{W^{1,2}(\Omega)}.
\end{equation}
We recall that the multiplicative embedding inequality \eqref{eq:mei} holds for all $u \in W^{1,2}(\bR^n)$ with $n\ge 1$; see \cite[Theorem~2.2, p. 62]{LSU}.
If $\Omega\subset \bR^n$ ($n\ge 1$) is an extension domain satisfying $\abs{\Omega}<\infty$, then for $u\in \tilde W^{1,2}(\Omega)$, we have
\begin{multline*}
\norm{u}_{L^{2(n+2)/n}(\Omega)} \le \norm{Eu}_{L^{2(n+2)/n}(\bR^n)}\le C\norm{D(Eu)}_{L^2(\bR^n)}^{n/(n+2)} \norm{Eu}_{L^2(\bR^n)}^{2/(n+2)}\\
\le C\norm{u}_{W^{1,2}(\Omega)}^{n/(n+2)} \norm{u}_{L^2(\Omega)}^{2/(n+2)}\le C\norm{Du}^{n/(n+2)}_{L^2(\Omega)}\norm{u}^{2/(n+2)}_{L^2(\Omega)},
\end{multline*}
where we used the Poincar\'e's inequality in the last step, namely,
\[
\norm{u}_{L^2(\Omega)} \le C \norm{Du}_{L^2(\Omega)}.
\]
Therefore, we have A1 in this case when $\Omega\subset \bR^n$ ($n\ge 1$) is an extension domain in with finite Lebesgue measure.
Extension domains include locally uniform domains considered by Jones \cite{Jones};
the extension operator that originally appeared in \cite{Jones} may not satisfy the first inequality in \eqref{extop}, but Rogers \cite{Rogers} constructed an extension operator that satisfies both inequalities in \eqref{extop}.

\item
Let $\Omega$ be a special Lipschitz domain in $\bR^n$ ($n\ge 2$), i.e., a domain above a Lipschitz function $\varphi \colon \bR^{n-1}\to \bR$.
Then, there exists a linear extension operator $E \colon W^{1,2}(\Omega) \to W^{1,2}(\bR^n)$ such that (see \cite{EG})
\[
\norm{Eu}_{L^2(\bR^n)} \le C \norm{u}_{L^2(\Omega)},\quad
\norm{D(Eu)}_{L^{2}(\bR^n)} \le C \norm{Du}_{L^{2}(\Omega)}.
\]
Therefore, similar to the above, for any $u \in W^{1,2}(\Omega)$, we have
\begin{multline*}
\norm{u}_{L^{2(n+2)/n}(\Omega)} \le C\norm{D(Eu)}_{L^2(\bR^n)}^{n/(n+2)} \norm{Eu}_{L^2(\bR^n)}^{2/(n+2)}\\
\le C \norm{Du}_{L^2(\Omega)}^{n/(n+2)} \norm{u}_{L^2(\Omega)}^{2/(n+2)}.
\end{multline*}
More generally, if $\Omega=\bigcup_{i=1}^k \Omega_i$, where $\Omega_i$ is special Lipschitz domain (rotating the coordinates axes if necessary), then it also satisfies A1.
Indeed, 
\[
\norm{u}_{L^{2(n+2)/n}(\Omega_i)} \le C \norm{Du}_{L^2(\Omega_i)}^{n/(n+2)} \norm{u}_{L^2(\Omega_i)}^{2/(n+2)} \le C \norm{Du}_{L^2(\Omega)}^{n/(n+2)} \norm{u}_{L^2(\Omega)}^{2/(n+2)},
\]
and thus we have
\[
\norm{u}_{L^{2(n+2)/n}(\Omega)} \le Ck \norm{Du}_{L^2(\Omega)}^{n/(n+2)} \norm{u}_{L^2(\Omega)}^{2/(n+2)}.
\]

\item
Suppose that $\Omega$ is an unbounded domain in $\bR^n$ ($n\ge 3$) with compact boundary and that $\Omega$ has the cone property.
Then, for all $u \in W^{1,2}(\Omega)$, we have (see \cite[Theorem~1]{CWZ})
\[
\norm{u}_{L^{2n/(n-2)}(\Omega)}\le C\norm{Du}_{L^2(\Omega)},
\]
which together with H\"older's inequality imply A1; see Remark~\ref{rmk3.3k}.
\end{enumerate}

\subsubsection{Examples for A2}
Next, we give examples when the condition A2 holds.
\begin{enumerate}[(1)]
\item
In the scalar case ($N=1$), De Giorgi-Moser-Nash theory implies A2 with $\mu_0=\mu_0(n,\lambda)$, $R_c=\infty$, and  $B_0=B_0(n,\lambda)$.
\item
If $n=2$ and the coefficients are time-independent, then A2 holds with $\mu_0=\mu_0(\lambda)$, $R_c=\infty$, and $B_0=B_0(N,\lambda)$.
It is a consequence of the well known theorem of Morrey \cite[pp. 143--148]{Morrey}; see \cite[Theorem~3.3]{Kim} for the derivation of A2 from Morrey's estimate.
\item
Let $\bar A^{\alpha\beta}$ be scalar functions on $\bR^{n+1}$ such that there is a constant $\lambda_0>0$ such that
\[
\bar A^{\alpha\beta}\xi_\beta\xi_\alpha\ge \lambda_0\abs{\vec \xi}^2,\quad\Abs{\bar A^{\alpha\beta} \xi_\beta \eta_\alpha} \le \lambda_0^{-1} \abs{\vec \xi} \abs{\vec \eta},
\]
for arbitrary column vectors $\vec \xi$, $\vec \eta \in \bR^n$ with elements $\xi_\alpha$,  $\eta_\alpha$.
We denote
\[
\mathscr{E}= \sup_{X\in \bR^{n+1}}\left\{
 \sum_{i,j=1}^N \sum_{\alpha,\beta =1}^n \Abs{a^{\alpha\beta}_{ij}(X)-\bar A^{\alpha\beta}(X)\delta_{ij}}^2\right\}^{1/2}.
\]
Then, there exists $\mathscr{E}_0=\mathscr{E}_0(n,\lambda_0)$ such that if $\mathscr{E}<\mathscr{E}_0$, then A2 is satisfied with $\mu_0=\mu_0(n,\lambda_0)$, $R_c=\infty$, and $B_0=B_0(n,N,\lambda_0)$; see \cite[Lemma~2.2]{CDK}.

\item
For a measurable function $f$ defined on $\bR^{n+1}$, we set for $\rho>0$
\begin{multline*}
\omega_\rho=\omega_\rho(f)=\sup_{X\in\bR^{n+1}}\sup_{r \leq \rho} \fint_{t-r^2}^{t+r^2}\!\!\!\fint_{B_r(x)} \bigabs{f(y,s)-\bar f_{x,r}(s)}\,dy\,ds\\
\text{where }\;\bar f_{x,r}(s)=\fint_{B_r(x)}f(y,s)\,dy.
\end{multline*}
We say that $f$ belongs to $\VMO_x$ if $\lim_{\rho\to 0} \omega_\rho(f)=0$.
If the coefficients belong to $\VMO_x$ class (or their $\BMO_x$ modulus $\omega_\rho$ are sufficiently small) then A2 holds with $R_c$, and $\mu_0, B_0$ depending on $n, N, \lambda$, $\Omega$ and the $\BMO_x$ modulus $\omega_\rho$ of the coefficients.
If $\Omega$ is bounded, then we can take $R_c=\diam\Omega$; see \cite[Lemma~2.3]{CDK}.
\end{enumerate}

\subsubsection{Examples for A3}			\label{sec:4.1.3}
Finally, we give examples when the condition A3 holds.
\begin{enumerate}[(1)]
\item
Let $N=1$ and $\Omega$ be an extension domain for which there exists $\theta>0$ and $R_M\in (0,\infty]$ such that \eqref{ogr} holds for all $x\in\Omega$ and $0<R<R_M$.
Then, A3 holds with $R_M$ and $B_1=B_1(n,\lambda, \Omega, \theta)$.
This follows from the De Giorgi-Moser-Nash theorem.
\item
If $n=2$, the coefficients are time-independent, and $\Omega$ is a bounded Lipschitz domain or a special Lipschitz domain, then A3 holds with $R_M=\diam\Omega$ and $B_1=B_1(N,\lambda, \Omega)$.
The proof is essentially the same as that of \cite[Lemma~4.4]{DK09}.

\item
If the coefficients belong to $\VMO_x$ class (or with sufficiently small $\BMO_x$ modulus $\omega_\rho$) and $\Omega$ is bounded $C^1$ domain (or Lipschitz domain with small Lipschitz constant), then A3 holds with $R_M=\diam\Omega$ and $B_1$ depending on $n, N, \lambda, \Omega$, and $\BMO_x$ modulus $\omega_\rho$ of the coefficients; see \cite[Lemma~6.1]{CK2} and \cite[Corollary~4.11]{CDK2}.
\end{enumerate}

\subsection{Elliptic Neumann function}			\label{sec:4.2enf}
Let us consider elliptic differential operator of the form
\[
L \vec u=  -\frac{\partial}{\partial x_\alpha} \left(A^{\alpha\beta}(x) \frac{\partial \vec u}{\partial x_\beta}\right),
\]
where $A^{\alpha\beta}(x)$ are $N\times N$ matrices whose elements $a^{\alpha\beta}_{ij}(x)$ are bounded measurable functions satisfying \eqref{eq1.03a}, and its adjoint operator $L^{*}$ defined by
\[
L^{*} \vec u=  -\frac{\partial}{\partial x_\alpha} \left(A^{\beta\alpha}(x)^T \frac{\partial \vec u}{\partial x_\beta}\right).
\]
Let $\Omega\subset \bR^n$ be an extension domain such that $\abs{\Omega}<\infty$.
We consider Neumann boundary value problem
\begin{equation}	\label{NP}
\left\{
\begin{aligned}
L\vec u&=\vec f \quad\text{in }\;\Omega,\\
\partial \vec u/\partial \nu&= 0\quad \text{on }\;\partial\Omega.
\end{aligned}
\right.
\end{equation}
Given $\vec f \in (W^{1,2}(\Omega)^{*})^N$, we say that $\vec u$ is a weak solution in $\tilde W^{1,2}(\Omega)^N$ of the problem \eqref{NP} if we have $\vec u \in \tilde{W}^{1,2}(\Omega)^N$ and
\[
\sB(\vec u,\vec \phi) = \ip{\vec f,\vec \phi}, \quad \forall \vec \phi \in \tilde W^{1,2}(\Omega)^N.
\]
By the standard elliptic theory, we have a unique weak solution in $\tilde W^{1,2}(\Omega)^N$ of the problem \eqref{NP} provided $\vec f$ satisfies the \emph{compatibility condition} $\ip{\vec f, \vec 1}=0$.
We say that an $N\times N$ matrix valued function $\vec G(x,y)$ is the  Neumann function of $L$ in $\Omega$ if it satisfies the following properties:
\begin{enumerate}[i)]
\item
$\vec G(\cdot,y) \in W^{1,1}_{loc}(\Omega)^{N^2}$ and $\vec G(\cdot,y) \in W^{1,2}(\Omega\setminus B_r(y))^{N^2}$ for all $y\in\Omega$ and $r>0$.
Moreover, $\int_{\Omega} \vec G(x,y)\,dx=0$.
\item
$L\vec G(\cdot,y)=(\delta_y-\frac{1}{\abs{\Omega}}) \vec I$ in $\Omega$,  $\partial \vec G(\cdot,y)/\partial \nu=0$ on $\partial\Omega$ for all $y\in\Omega$ in the  sense that for $\vec \phi \in C^\infty_c(\bar \Omega)^N$ satisfying $\int_\Omega \vec \phi=0$, we have
\[
\int_{\Omega}a^{\alpha\beta}_{ij} D_\beta G_{jk}(\cdot,y) D_\alpha \phi^i = \phi^k(y).
\]
\item
For any $\vec f \in C_c^\infty(\bar\Omega)^N$ satisfying $\int_\Omega \vec f=0$, the function $\vec u$ defined by
\begin{equation}	\label{choej}
\vec u(x)=\int_\Omega \vec G(y,x)^T \vec f(y)\,dy
\end{equation}
is the weak solution in $\tilde{W}^{1,2}(\Omega)^N$ of the problem
\begin{equation}		\label{hee}
\left\{
\begin{aligned}
L^{*}\vec u&=\vec f \quad\text{in }\;\Omega,\\
\partial \vec u/\partial \nu^{*}&= 0\quad \text{on }\;\partial\Omega.
\end{aligned}
\right.
\end{equation}
\end{enumerate}
Note the property iii) and the requirement $\int_\Omega \vec G(x, y)\,dx=0$ give the uniqueness of the Neumann function.
We warn the reader that the above definition is slightly different from the one given in a recent article \cite{CK2}, where it is assumed that $n\ge 3$ and $\partial\Omega$ is Lipschitz so that the boundary trace of a $W^{1,2}(\Omega)$ function is well defined.
In \cite{CK2}, the Neumann function satisfies the \emph{normalization condition} $\int_{\partial\Omega} \vec G(x,y)\,dS_x =0$ instead of $\int_\Omega \vec G(x,y)\,dx=0$.
We point out that they differ by a function $\vec U$, which is a weak solution in $W^{1,2}(\Omega)^{N^2}$ of the problem
\[
\left\{
\begin{aligned}
L\vec U&=\tfrac{1}{\abs{\Omega}}\vec I \quad\text{in }\;\Omega,\\
\partial \vec U/\partial \nu&= -\tfrac{1}{\abs{\partial\Omega}}\vec I\quad \text{on }\;\partial\Omega.
\end{aligned}
\right.
\]
We treat only the two dimensional case below because the case when $n\ge 3$ is studied in \cite{CK2} extensively by a different but more direct method.
We also mention that in a recent article \cite{TKB2}, the Neumann function is constructed in $\BMO(\Omega)$, where $\Omega$ is a bounded Lipschitz domain in $\bR^2$.

\begin{theorem}		\label{thm4.4a}
Let $\Omega\subset \bR^2$ be an extension domain such that $\abs{\Omega}<\infty$.
Then, there exists the Neumann function $\vec G(x,y)$ of $L$ in $\Omega$.
It is continuous in $\set{(x,y)\in\Omega\times\Omega \colon x\neq y}$ and for $x,y$ satisfying $0<\abs{x-y}<R/2$, where $R=d_x \wedge d_y$,  we have
\begin{equation}			\label{eq4.03mr}
\abs{\vec G(x,y)} \le C\left(1+\varrho^2 R^{-2}+ \ln (R/\abs{x-y})\right),
\end{equation}
where $C=C(N,\lambda)$ and $\varrho$ is the same constant as in \eqref{eq002v}.
Also, we have
\begin{equation}
\label{eq:E23}
\vec G(y,x)= \vec G^{*}(x,y)^T
\end{equation}
where $\vec G^{*}(x,y)$ is the Neumann function of the adjoint $L^{*}$ in $\Omega$.
Moreover, if $\Omega$ is a Lipschitz domain, then for all $x, y\in\Omega$, we have
\begin{equation}		\label{eq13.03k}
\abs{\vec G(x,y)} \le C\left(1+\ln (d/\abs{x-y})\right),\quad d=\diam \Omega,
\end{equation}
where $C$ depends on the Lipschitz character of $\partial \Omega$, $\diam \Omega$ as well as $N$ and $\lambda$.
\end{theorem}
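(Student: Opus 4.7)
The plan is to obtain $\vec G$ by time-integrating the parabolic Neumann Green's function supplied by Theorem~\ref{EP.thm1} applied to $\sL = \partial_t - \partial_{x_\alpha}(A^{\alpha\beta}\partial_{x_\beta})$ on $\cQ = \Omega\times\bR$. In this setting, A1 holds because $\Omega$ is an extension domain with $\abs{\Omega}<\infty$ (Section~\ref{sec:apps}, example (1) for A1), and A2 holds with $\mu_0=\mu_0(\lambda)$, $R_c=\infty$, and $B_0=B_0(N,\lambda)$ because $n=2$ and the coefficients are time-independent (Section~\ref{sec:apps}, example (2) for A2). Theorem~\ref{EP.thm1} then produces $\vec N(x,t,y,s)$ together with all the estimates catalogued in Remark~\ref{rmk3.8}.

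Since $\mathbf 1\in\sV^{1,0}_2(\cQ)$ when $\abs{\Omega}<\infty$, testing the weak identity \eqref{eq2.04x} against constant vectors yields the conservation relation $\int_\Omega \vec N(x,t,y,0)\,dx = \vec I$ for all $t>0$. Set $\tilde{\vec N}(x,t,y):=\vec N(x,t,y,0)-\abs{\Omega}^{-1}\vec I$, so that $\tilde{\vec N}(\cdot,t,y)\in\tilde W^{1,2}(\Omega)$, $\sL\tilde{\vec N}=0$ for $t>0$, and $\partial\tilde{\vec N}/\partial\nu=0$. When $n=2$, combining \eqref{eq:mei} with H\"older's inequality on the finite-measure set $\Omega$ yields the Poincar\'e inequality $\norm{u}_{L^2(\Omega)}\le C\abs{\Omega}^{1/2}\norm{Du}_{L^2(\Omega)}$ for all $u\in\tilde W^{1,2}(\Omega)$. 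Inserting this into the energy identity for $\tilde{\vec N}$ produces exponential decay
\begin{equation*}
\norm{\tilde{\vec N}(\cdot,t,y)}_{L^2(\Omega)} \le C e^{-ct},\qquad t\ge 1,
\end{equation*}
with $c=c(\lambda,\abs{\Omega})>0$. Combined with the short-time $\sL_p$ estimates of Remark~\ref{rmk3.8}(v), the integral
\begin{equation*}
\vec G(x,y) := \int_0^\infty \tilde{\vec N}(x,t,y)\,dt
\end{equation*}
converges in $L^1_{\mathrm{loc}}(\Omega)$, satisfies $\int_\Omega\vec G(\cdot,y)=0$, lies in $W^{1,2}(\Omega\setminus B_r(y))^{N^2}$ for each $r>0$, and inherits the weak Neumann identity for $\vec G$ by integrating the parabolic identity in $t$.

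For $x,y$ with $0<\abs{x-y}<R/2$ where $R=d_x\wedge d_y$, I split the time integral at $\abs{x-y}^2$ and $R^2/4$. On $(0,R^2/4)$, the interior pointwise bound $\abs{\vec N(x,t,y,0)}\le C\max(\abs{x-y},\sqrt t)^{-2}$ from \eqref{eq3.07c} gives
\begin{equation*}
\int_0^{\abs{x-y}^2}\!\frac{C\,dt}{\abs{x-y}^2}+\int_{\abs{x-y}^2}^{R^2/4}\!\frac{C\,dt}{t}\le C\bigl(1+\ln(R/\abs{x-y})\bigr).
\end{equation*}
On $(R^2/4,\infty)$, the interior H\"older estimate \eqref{IH} applied to $\tilde{\vec N}$ on cylinders $Q^-_{R/2}(x,t)$ controls the pointwise value by an $L^2$-average; combined with the exponential $L^2$ decay above, this is absorbed into the $\varrho^2 R^{-2}$ term, yielding \eqref{eq4.03mr}. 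The symmetry \eqref{eq:E23} is the elliptic image of the parabolic duality $\vec N_{\sL}(X,Y) = \vec N_{\sL^{*}}(Y,X)^T$, itself an immediate consequence of applying property~(c) of Section~\ref{sec:ng} to both $\sL$ and $\sL^{*}$ and then integrating in time.

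Finally, if $\Omega$ is a bounded Lipschitz domain, then A3 also holds with $R_M=\diam\Omega$ and $B_1=B_1(N,\lambda,\Omega)$ by Section~\ref{sec:4.1.3}, example (2). Theorem~\ref{GE.thm1} then provides a global Gaussian bound for $\vec N$, and integrating it in $t$, splitting at $\abs{x-y}^2$ and $R_M^2$, yields the uniform logarithmic estimate \eqref{eq13.03k}. The main obstacle lies in the third paragraph: without A3 one has neither a global $L^\infty$ bound nor a Gaussian envelope for $\vec N$, so the decay of $\tilde{\vec N}$ past the scale of $R$ must be patched together from energy-based exponential decay and the interior H\"older estimate, all while keeping explicit track of the matching constant $\varrho^2 R^{-2}$ that appears in \eqref{eq4.03mr}.
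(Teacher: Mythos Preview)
Your approach is essentially the paper's: define $\vec G(x,y)=\int_0^\infty \tilde{\vec N}(x,t,y)\,dt$, use the interior pointwise bound \eqref{eq3.07c} on $(0,R^2/4)$ to produce the logarithm, and combine the interior $L^2\to L^\infty$ estimate with Poincar\'e-driven exponential $L^2$ decay of $\tilde{\vec N}$ on the tail to produce the $\varrho^2R^{-2}$ term. The verification of properties i)--iii), the symmetry \eqref{eq:E23} via parabolic duality \eqref{ctn.eq1}, and the invocation of A3 for Lipschitz domains all match the paper's route.

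There is one genuine gap in your last paragraph. You write that in the Lipschitz case one obtains \eqref{eq13.03k} by ``integrating [the Gaussian bound for $\vec N$] in $t$, splitting at $\abs{x-y}^2$ and $R_M^2$''. This does not work: the bound \eqref{GE.thm1.1} with $n=2$ gives $\abs{\vec N(x,t,y,0)}\le C R_M^{-2}$ for $t>R_M^2$, with no decay in $t$, so the integral over $(R_M^2,\infty)$ diverges. Equivalently, $\vec G$ is the time integral of $\tilde{\vec N}$, not of $\vec N$, and $\abs{\tilde{\vec N}}\le\abs{\vec N}+\abs{\Omega}^{-1}$ is no better on that range. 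The Gaussian bound is only useful on a bounded time interval; on the tail you must revert to the exponential decay $\norm{\tilde{\vec N}(\cdot,t,y)}_{L^2(\Omega)}\le Cr^{-1}e^{-\lambda\varrho^{-2}(t-t_0)}$ that you already established, exactly as you did for the interior estimate \eqref{eq4.03mr}. The paper does precisely this: it uses the pointwise consequence of A3 on $(0,2r^2)$ with $r=\tfrac12\min(\varrho,d)$, then the exponential $L^2$ decay together with the boundary $L^\infty$ estimate coming from A3 on $(2r^2,\infty)$, and the resulting tail contributes $C\varrho^2 r^{-2}$, which is absorbed into the constant in \eqref{eq13.03k} because $r$ depends only on $\varrho$ and $d$. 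Once you make this correction, your argument is complete and coincides with the paper's.
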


\section{Proofs of main Theorems}		\label{sec:pf}

\subsection{Proof of Theorem~\ref{EP.thm1}: Case when $\abs{\Omega}<\infty$.}\label{sec5.1}
In the proof, we denote by $C$ a constant depending on $n, N, \lambda,  \mu_0, B_0$ unless otherwise stated; if it depends also on $\gamma$, it will be written as $C_\gamma$, etc.
We begin with constructing mollified Neumann Green's functions.
Fix a function $\Phi\in C^\infty_c(B_1(0))$ satisfying $0\le \Phi\le 2$ and $\int_{\bR^n}\Phi=1$.
Let $Y=(y,s)\in \cQ$ be fixed but arbitrary.
For $\epsilon>0$, we define
\[
\Phi_{y,\epsilon}(x)=\epsilon^{-n}\Phi((x-y)/\epsilon).
\]
For any $T>s$, let $\vec v_\epsilon=\vec v_{\epsilon;Y,k}$ be a unique weak solution in $\sV^{1,0}_2(\Omega\times(s,T))^N$ of the problem
\begin{equation}		\label{mn.eq1}
\left\{
\begin{aligned}
\sL\vec u= 0\quad &\text{in }\; \Omega\times(s,T)\\
\partial \vec u/\partial \nu=  0 \quad&\text{on }\; \partial\Omega\times (s,T)\\
\vec u= \Phi_{y,\epsilon}\vec e_k \quad&\text{on }\; \Omega\times \set{s},
\end{aligned}
\right.
\end{equation}
where $\vec e_k$ is the $k$-th unit vector in $\bR^N$; with an aid of the inequality \eqref{paraemb}, which was a consequence of the condition A1, the unique solvability of the problem \eqref{mn.eq1} follows from the Galerkin method described  in \cite[\S III.5]{LSU}.
By setting $\vec v_\epsilon(x,t)=0$ for $t<s$ and letting $T\to \infty$, we may assume the $\vec v_\epsilon$ is defined on the entire $\cQ$.
Then, by the energy inequality, we obtain
\begin{equation}		\label{mn.eq1c}
\tri{\vec v_\epsilon}_{\cQ} \le C\norm{\Phi_{y,\epsilon}}_{L^2(\Omega)}\le C \epsilon^{-n/2}.
\end{equation}
Thus $\vec v_\epsilon$ belongs to $\sV_2(\cQ)^N$.
We define the \emph{mollified Neumann Green's function} $\vec N^\epsilon(\cdot,Y)=\big( N^\epsilon_{jk}(\cdot,Y)\big)^N_{j,k=1}$ for $\sL$ by setting
\[
N^\epsilon_{jk}(\cdot,Y)=v_\epsilon^j=v^j_{\epsilon;Y,k}.
\]
Next, for a given $\vec f=(f^1,\ldots,f^N)^T \in \sC^\infty_c(\bar \Omega\times (a,b))^N$, where $a<s<b$, let $\vec u$ be a unique weak solution in $\sV^{1,0}_2(\Omega\times (a,b))^N$ of the \emph{backward problem}
\begin{equation}		\label{mn.eq2}		
\left\{
\begin{aligned}
\sL^{*}\vec u=\vec f\quad &\text{in }\; \Omega\times (a,b)\\
\partial \vec u/\partial \nu^{*}=  0 \quad&\text{on }\; \partial\Omega\times (a,b)\\
\vec u=  0 \quad&\text{on }\; \Omega\times \set{b}.
\end{aligned}
\right.
\end{equation}
By setting $\vec u(x,t)=0$ for $t>b$ and letting $a\to -\infty$, we may assume that $\vec u$ is defined on the entire $\cQ$.
Then, it is easy to see that we have the identity
\begin{equation}		\label{mn.eq3}
\int_\Omega \Phi_{y,\epsilon}(x) u^k(x,s)\,dx =\iint_{\cQ} N^\epsilon_{ik}(X,Y) f^i(X)\,dX.
\end{equation}
If we assume that $\vec f$ is supported in $Q= Q^{+}_R(X_0)$, where $0<R<\bar d_Y$, then the energy inequality and H\"older's inequality yields
\[
\tri{\vec u}^2_{\cQ} \le C \norm{\vec f}_{\sL_{2(n+2)/(n+4)}(Q)}\norm{\vec u}_{\sL_{2(n+2)/n}(Q)}.
\]
Then, by the inequality \eqref{paraemb} applied $Q=Q^{+}_R(X_0)=B_R(x_0)\times (t_0,t_0+R^2)$, we get
\begin{equation}		\label{mn.eq4b}
\tri{\vec u}_{\cQ}\le C\norm{\vec f}_{\sL_{2(n+2)/(n+4)}(Q)}.
\end{equation}
By utilizing \eqref{mn.eq4b} and the condition A2, and proceeding as in \cite[Section~3.2]{CDK}, we find that $\vec u$ is continuous in $\frac{1}{4}Q=Q_{R/4}^{+}(X_0)$ and satisfies (see \cite[Eq.(3.15)]{CDK})
\begin{equation}		\label{mn.eq4d}
\abs{\vec u}_{0; \frac{1}{4}Q} \le CR^{2}\norm{\vec f}_{\sL_\infty(Q)}.
\end{equation}
If $B_\epsilon(y)\times \set{s}\subset \frac{1}{4}Q$, then by \eqref{mn.eq3} and \eqref{mn.eq4d}, we obtain
\[
\Abs{\int_{Q} N^\epsilon_{ik}(\cdot,Y)f^i}\le \abs{\vec u}_{0; \frac{1}{4}Q}\le CR^2\norm{\vec f}_{\sL_\infty(Q)}.
\]
Therefore, by duality, it follows that we have
\begin{equation}		\label{mn.eq4e}
\Norm{\vec N^\epsilon(\cdot,Y)}_{\sL_1(Q)}\le CR^2,
\end{equation}
provided $0<R<\bar d_Y$ and $B_\epsilon(y)\times \set{s}\subset \frac{1}{4}Q=Q_{R/4}^{+}(X_0)$.
For $X\in \cQ$ such that $0<d\coloneq \abs{X-Y}_\sP< \bar d_Y/6$, if we set $r=d/3$, $X_0=(y, s-2d^2)$, and $R=6d$, then it is easy to see that for $\epsilon<d/3$, we have
\[
B_\epsilon(y)\times\set{s}\subset Q^+_{R/4}(X_0), \quad Q^{-}_r(X)\subset Q^{+}_R(X_0 ),
\]
and also that $\vec v_\epsilon=\vec v_{\epsilon;Y,k}$ satisfies $\sL\vec v_\epsilon=0$ in $Q^{-}_r(X)$.
Then, we derive from the condition~A2 that (see \cite[Lemma~2.6]{CDK})
\begin{equation}		\label{mn.eq4g}
\norm{\vec v_\epsilon}_{\sL_\infty(Q_{r/2}^{-}(X))}\le \frac{C}{r^{n+2}}\norm{\vec v_\epsilon}_{\sL_1(Q^{-}_r(X))}.
\end{equation}
By combining \eqref{mn.eq4e} and \eqref{mn.eq4g}, we obtain that
\begin{equation*}
\abs{\vec v_\epsilon(X)}\le C r^{-n-2}\norm{\vec v_\epsilon}_{\sL_1(Q_r^-(X))}\le Cr^{-n}.
\end{equation*}
That is, for any $X,Y\in \cQ$ satisfying $0<\abs{X-Y}_\sP<\bar d_Y/6$, we get
\begin{equation}		\label{mn.eq4h}
\abs{\vec N^\epsilon(X,Y)}\le C\abs{X-Y}^{-n}_\sP, \quad  \forall \epsilon<\tfrac{1}{3}\abs{X-Y}_\sP.
\end{equation}
Next, notice that we have
\[
\int_{\Omega}\vec N^\epsilon(x,t,y,s)\,dx=H(t-s)\vec I;\quad H(t)=1_{[0,\infty)}(t).
\]
We define
\begin{equation}		\label{wse.eq1aa}
\tilde{\vec N}{}^\epsilon(x,t,y,s)=\vec N^\epsilon(x,t,y,s)-\frac{1}{\abs{\Omega}}H(t-s)\vec I
\end{equation}
so that
\begin{equation}		\label{eq5.12c}
\int_\Omega\tilde{\vec N}{}^\epsilon(x,t,y,s)\,dx= 0.
\end{equation}
Therefore, by \eqref{eq5.13ddd}, for any $-\infty\le a<b\le \infty$, we have
\begin{equation}			\label{eq5.13d}
\norm{\tilde{\vec N}{}^\epsilon(\cdot,Y)}_{\sL_{2(n+2)/n}(\Omega\times(a,b))} \le \gamma  \tri{\tilde{\vec N}{}^\epsilon(\cdot,Y)}_{\Omega\times(a,b)}.
\end{equation}

\begin{lemma}		\label{wse.lem1}
For any $Y\in \cQ$, $0<R<\bar d_Y$, and $\epsilon>0$, we have
\begin{gather}		
\label{wse.eq2b}
\tri{\tilde{\vec N}{}^\epsilon(\cdot,Y)}_{\cQ\setminus \bar{Q}_R(Y)}\le CR^{-n/2},\\
\label{wse.eq2c}
\norm{\tilde{\vec N}{}^\epsilon(\cdot,Y)}_{\sL_{2(n+2)/n}(\cQ\setminus \bar Q_R(Y))}\le C_\gamma R^{-n/2}.
\end{gather}
Also, for any $Y\in \cQ$ and $\epsilon>0$, we have
\begin{align}
\label{wse.eq2e}
\bigabs{\bigset{X\in\cQ \colon \abs{\tilde{\vec N}{}^\epsilon(X,Y)}>\tau}} &\le C_\gamma \tau^{-\frac{n+2}{n}}, \quad \forall \tau>\bar d_Y^{-n},\\
\label{wse.eq2d}
\bigabs{\bigset{X\in \cQ \colon \abs{D_x\tilde{\vec N}{}^\epsilon(X,Y)}>\tau}}&\le C \tau^{-\frac{n+2}{n+1}},  \quad \forall \tau>\bar d_Y^{-(n+1)}.
\end{align}
Furthermore, for any $Y\in \cQ$, $0<R<\bar d_Y$, and $\epsilon>0$, we have
\begin{align}
\label{wse.eq2h}
\norm{\tilde{\vec N}{}^\epsilon(\cdot,Y)}_{\sL_p(Q_R(Y))}&\le C_{p,\gamma} R^{-n+(n+2)/p},  \quad \forall p\in [1,\tfrac{n+2}{n}),\\
\label{wse.eq2i}
\norm{D\tilde{\vec N}{}^\epsilon(\cdot,Y)}_{\sL_p(Q_R(Y))} &\le C_{p} R^{-n-1+(n+2)/p},  \quad \forall p\in[1,\tfrac{n+2}{n+1}).
\end{align}
\end{lemma}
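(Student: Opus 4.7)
The plan is to fix $Y=(y,s)$ and $R\in(0,\bar d_Y)$, write $\vec w=\tilde{\vec N}{}^\epsilon(\cdot,Y)$ (one column at a time), and exploit three properties throughout: (i) $\int_\Omega \vec w(x,t)\,dx=0$ a.e.\ in $t$, by \eqref{eq5.12c}; (ii) $\vec w$ solves the homogeneous problem $\sL\vec w=0$ with vanishing conormal derivative on $\cQ\setminus\{t=s\}$, because the spatially-constant correction $\frac{1}{|\Omega|}H(t-s)\vec e_k$ is annihilated by $\sL$ and carries no flux; (iii) on the parabolic annulus $Q_R(Y)\setminus Q_{R/2}(Y)$ the pointwise bound \eqref{mn.eq4h} yields $|\vec w|\le CR^{-n}$ (plus the harmless $1/|\Omega|\le cR^{-n}$ since $B_R(y)\subset\Omega$).

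\textbf{Step 1: energy bound \eqref{wse.eq2b}.} I pick a smooth cutoff $\zeta$ equal to $1$ on $\cQ\setminus Q_R(Y)$, vanishing on $Q_{R/2}(Y)$, with $|D\zeta|\le C/R$, $|\zeta_t|\le C/R^2$. Testing the weak formulation of $\sL\vec w=0$ against $\zeta^2\vec w$ and invoking the ellipticity in \eqref{eq1.03a} produces the Caccioppoli inequality
\[
\tri{\zeta\vec w}_{\cQ}^2 \le C\int_{\cQ}\bigl(|\zeta_t|+|D\zeta|^2\bigr)|\vec w|^2 \le \frac{C}{R^2}\int_{Q_R(Y)\setminus Q_{R/2}(Y)}|\vec w|^2.
\]
Property (iii) bounds the integrand on the annulus by $CR^{-2n}$ over a set of measure $\le CR^{n+2}$, giving $\tri{\zeta\vec w}_{\cQ}\le CR^{-n/2}$. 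Since $\zeta\equiv 1$ off $Q_R(Y)$, \eqref{wse.eq2b} follows.

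\textbf{Step 2: embedding estimate \eqref{wse.eq2c}.} The parabolic multiplicative embedding \eqref{eq5.13d} requires a mean-zero hypothesis that $\zeta\vec w$ does not satisfy; I handle this by introducing $\vec w_\sharp := \zeta\vec w - m(t)$, where $m(t)=\fint_\Omega\zeta\vec w\,dx$. By (i), $|\Omega|\,m(t) = -\int_{B_R(y)}(1-\zeta)\vec w$, which is $O(1)$ uniformly in $t$ thanks to (iii), so the correction $m(t)$ is both bounded and cheap in $\sV_2$ norm. Applying \eqref{eq5.13d} to $\vec w_\sharp$ and using Step~1 to control $\tri{\vec w_\sharp}_{\cQ}\le C R^{-n/2}$, then adding back $m(t)$ (a lower-order perturbation), gives \eqref{wse.eq2c}.

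\textbf{Step 3: weak-type and $\sL_p$ estimates.} For $\tau>\bar d_Y^{-n}$, set $R=\tau^{-1/n}<\bar d_Y$ and split along $Q_R(Y)$: on $\cQ\setminus Q_R(Y)$, Chebyshev plus \eqref{wse.eq2c} give
\[
\bigabs{\{|\vec w|>\tau\}\cap(\cQ\setminus Q_R(Y))} \le \tau^{-\frac{2(n+2)}{n}}\|\vec w\|_{\sL_{2(n+2)/n}}^{\frac{2(n+2)}{n}} \le C_\gamma\tau^{-\frac{n+2}{n}},
\]
while on $Q_R(Y)$ the trivial volume bound $|Q_R(Y)|\le CR^{n+2}=C\tau^{-(n+2)/n}$ suffices; summing yields \eqref{wse.eq2e}. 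The gradient analogue \eqref{wse.eq2d} is the same argument with $R=\tau^{-1/(n+1)}$ and the $L^2$ bound from \eqref{wse.eq2b}. The $\sL_p$ bounds \eqref{wse.eq2h}, \eqref{wse.eq2i} then follow from layer-cake integration: choose $\tau_0=R^{-n}$ (resp.\ $R^{-(n+1)}$), use the volume bound for $\tau\le\tau_0$ and the weak-type estimate for $\tau>\tau_0$, and balance the two pieces.

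The chief obstacle is Step~2: one must either perform the mean-subtraction trick carefully (tracking how the correction $m(t)$ interacts with \eqref{eq5.13d} — this is where the $\gamma$ dependence enters the constants $C_\gamma$) or else apply \eqref{eq5.13d} separately on the time slabs $\Omega\times(s+R^2,\infty)$ and on the spatial annulus $(\Omega\setminus B_R(y))\times(s-R^2,s+R^2)$ and glue the pieces. Either route is technical but standard; the remaining steps are clean consequences of the energy/embedding pair and classical real-analysis tools.
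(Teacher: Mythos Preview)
Your overall architecture—energy/Caccioppoli for \eqref{wse.eq2b}, parabolic embedding for \eqref{wse.eq2c}, then Chebyshev plus layer-cake for the remaining four estimates—is exactly what the paper does, and Steps~1 and~3 are fine (with the small caveat that for $\epsilon\ge R/6$ you cannot invoke \eqref{mn.eq4h} on the annulus; there one simply uses the global energy bound \eqref{mn.eq1c}, which already gives $\tri{\vec N^\epsilon(\cdot,Y)}_{\cQ}\le C\epsilon^{-n/2}\le CR^{-n/2}$).

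The gap is in your approach~(a) to Step~2. You assert that $|\Omega|\,m(t)=-\int_{B_R(y)}(1-\zeta)\vec w$ is $O(1)$ ``thanks to (iii)'', but (iii) is only the annulus bound $|\vec w|\le CR^{-n}$ on $Q_R(Y)\setminus Q_{R/2}(Y)$; on the inner cylinder $Q_{R/2}(Y)$, where $(1-\zeta)\equiv 1$, you have no pointwise control of $\vec w$ whatsoever (for systems there is no maximum principle or $L^1$ contraction to fall back on). So the mean-subtraction route does not close as written.

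Your alternative~(b)—splitting into a far-time slab and a near-time annular slab—is precisely the paper's argument, with one correction you should be aware of: on the near-time piece $(\Omega\setminus B_R(y))\times(s,s+R^2)$ the cutoff function $\zeta\vec w$ (the paper's $(1-\zeta)\tilde{\vec N}{}^\epsilon$) is \emph{not} mean-zero in $x$, so \eqref{eq5.13d} does not apply there either. The paper instead uses \eqref{paraemb}, which is valid for any $\sV_2$ function on a finite time interval and carries only the harmless extra factor $(2\gamma + R^n|\Omega|^{-1})^{1/(n+2)}\le(2\gamma+1)^{1/(n+2)}$. On the far-time slab $\Omega\times(s+R^2,\infty)$ the full function $\tilde{\vec N}{}^\epsilon$ is itself mean-zero by \eqref{eq5.12c}, so \eqref{eq5.13d} applies there directly without any cutoff or correction.
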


\begin{proof}
By \eqref{mn.eq1c}, \eqref{mn.eq4h}, and the energy inequality, we get (see \cite[Eq.~(3.20)]{CDK})
\[
\tri{\vec N^\epsilon(\cdot,Y)}_{\cQ\setminus \bar{Q}_R(Y)}\le CR^{-n/2},\quad 0<R<\bar d_Y/6.
\]
from which and \eqref{wse.eq2b} follows readily since $\abs{\Omega} \ge C R^n$ and $\bar d_Y$ is comparable to $\bar d_Y/6$.
In fact, if we fix a $\zeta \in \sC^\infty_c(Q_R(Y))$, where $0<R<\bar d_Y/6$, such that
\begin{equation}		\label{wse.eq3aa}
0\le \zeta\le1, \quad \zeta\equiv 1 \text{ on }Q_{R/2}(Y), \quad \abs{D_x \zeta}^2+ \abs{\zeta_t}\le 16 R^{-2},
\end{equation}
then we have (see \cite[Eq.~(3.22)]{CDK})
\[
\tri{(1-\zeta)\vec N^\epsilon(\cdot,Y)}_{\cQ}\le CR^{-n/2},
\]
and thus by \eqref{wse.eq1aa} and the properties of $\zeta$, we get
\begin{equation}
\label{wse.eq3c}
\tri{(1-\zeta)\tilde{\vec N}{}^\epsilon(\cdot,Y)}_{\cQ}\le C(R^{-n/2}+R^{n/2} \abs{\Omega}^{-1}+\abs{\Omega}^{-1/2}) \le C R^{-n/2}.
\end{equation}
To get \eqref{wse.eq2c}, set $\cQ_{(1)}\coloneq \Omega\times(s+R^2,\infty)$ and $\cQ_{(2)}\coloneq (\Omega\setminus B_R(y))\times(s,s+R^2)$ and note that
by \eqref{eq5.13d} and \eqref{wse.eq2b} we have
\begin{equation}		\label{wse.eq3e}
\norm{\tilde{\vec N}{}^\epsilon(\cdot,Y)}_{\sL_{2(n+2)/n}(\cQ_{(1)})} \le C \gamma \tri{\tilde{\vec N}{}^\epsilon(\cdot,Y)}_{\cQ_{(1)}}\le C\gamma R^{-n/2}
\end{equation}
and similarly, by \eqref{paraemb} and \eqref{wse.eq3c}, we have
\begin{multline}	\label{wse.eq3d}
\norm{\tilde{\vec N}{}^\epsilon(\cdot,Y)}_{\sL_{2(n+2)/n}(\cQ_{(2)})} \le (2\gamma+1)^{\frac{1}{n+2}}\tri{(1-\zeta)\tilde{\vec N}{}^\epsilon(\cdot,Y)}_{\Omega\times(s,s+R^2)} \\
\le C (2\gamma+1)^{\frac{1}{n+2}}  R^{-n/2}.
\end{multline}
Therefore, by combining \eqref{wse.eq3e} and \eqref{wse.eq3d}, we obtain \eqref{wse.eq2c}.
We derive \eqref{wse.eq2h} and \eqref{wse.eq2i}, respectively, from \eqref{wse.eq2e} and  \eqref{wse.eq2d}, which in turn follows respectively from  \eqref{wse.eq2c} and \eqref{wse.eq2b}; see \cite[Lemmas~3.3 and 3.4]{CDK}.
\end{proof}

\begin{lemma}
Let $\set{u_k}_{k=1}^\infty$ be a sequence in $\sV_2(\cQ)$.
If $\sup_k \tri{u_k}_{\cQ} \le A$, then there exist a subsequence $\set{u_{k_j}}_{j=1}^\infty \subseteq \set{u_k}_{k=1}^\infty$ and $u\in \sV_2(\cQ)$ with $\tri{u}_{\cQ}\le A$ such that $u_{k_j} \rightharpoonup u$ weakly in  $\sW^{1,0}_2(\Omega\times (a,b))$ for all $-\infty<a<b<\infty$.
\end{lemma}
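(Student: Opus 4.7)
The plan is to extract a limit by a Cantor diagonalization on exhausting finite time slabs, and then verify the $\sV_2$ norm bound by combining weak lower semicontinuity of the $L^2$-norm of the gradient with a density and Lebesgue-point argument for the time-wise $L^2$ supremum.

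First, I set $Q_m = \Omega \times (-m,m)$ for each positive integer $m$. The hypothesis $\tri{u_k}_\cQ \le A$ gives $\norm{D_x u_k}_{\sL_2(Q_m)} \le A$ and $\norm{u_k}_{\sL_2(Q_m)}^2 \le 2m A^2$, so that $\set{u_k}$ is uniformly bounded in the Hilbert space $\sW^{1,0}_2(Q_m)$. Weak compactness in $\sW^{1,0}_2(Q_1)$ yields a subsequence converging weakly on $Q_1$; passing to a further subsequence gives weak convergence on $Q_2$, etc. A standard diagonalization produces a subsequence $\set{u_{k_j}}$ that converges weakly in $\sW^{1,0}_2(Q_m)$ for every $m$, to a limit that is consistent across slabs by uniqueness of weak limits. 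Since any finite interval $(a,b)$ lies in some $Q_m$, this gives a single $u$ on $\cQ$ with $u_{k_j} \rightharpoonup u$ weakly in $\sW^{1,0}_2(\Omega \times (a,b))$ for every finite $a<b$.

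To verify $\tri{u}_\cQ \le A$, write $a_j = \int_\cQ \abs{D_x u_{k_j}}^2$ and $b_j = \esssup_t \int_\Omega \abs{u_{k_j}(\cdot,t)}^2\,dx$, so $a_j + b_j \le A^2$. By weak lower semicontinuity of the $\sL_2$ norm on each $Q_m$, $\int_{Q_m}\abs{D_x u}^2 \le \liminf_j a_j$, and letting $m\to\infty$ yields $\int_\cQ\abs{D_x u}^2 \le \liminf_j a_j$. For the essential supremum, fix a countable dense subset $\set{\phi_n}$ of $L^2(\Omega)$. For each $n$ put $F_j(t) = \int_\Omega u_{k_j}(x,t)\phi_n(x)\,dx$ and $F(t) = \int_\Omega u(x,t)\phi_n(x)\,dx$; by Cauchy--Schwarz, $\abs{F_j(t)} \le b_j^{1/2}\norm{\phi_n}_{L^2}$ for a.e.\ $t$. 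Testing the weak convergence $u_{k_j}\rightharpoonup u$ on $\Omega\times(t_0-h,t_0+h)$ against $\phi_n\otimes 1_{(t_0-h,t_0+h)}$ gives $\int_{t_0-h}^{t_0+h} F_j \to \int_{t_0-h}^{t_0+h} F$, so that $\bigabs{\tfrac{1}{2h}\int_{t_0-h}^{t_0+h} F}\le (\liminf_j b_j)^{1/2}\norm{\phi_n}_{L^2}$. Sending $h\to 0$ at every Lebesgue point of $F$ yields $\abs{F(t_0)}\le(\liminf_j b_j)^{1/2}\norm{\phi_n}_{L^2}$ for a.e.\ $t_0$. Intersecting the resulting countable family of full-measure sets and taking the supremum over $n$ via density of $\set{\phi_n}$ gives $\norm{u(\cdot,t_0)}_{L^2(\Omega)}^2 \le \liminf_j b_j$ for a.e.\ $t_0$. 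Combining with the gradient bound and the elementary inequality $\liminf_j a_j + \liminf_j b_j \le \liminf_j(a_j+b_j)\le A^2$ yields $\tri{u}_\cQ^2 \le A^2$.

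The main obstacle is the essential supremum part: weak $\sW^{1,0}_2$ convergence on slabs does not directly control $\norm{u(\cdot,t)}_{L^2(\Omega)}$ pointwise in $t$, and the Lebesgue-differentiation trick with a countable dense family of test functions in $L^2(\Omega)$ is the natural remedy. The gradient bound and the diagonalization are standard once one observes the uniform $\sW^{1,0}_2$ bound on each finite slab.
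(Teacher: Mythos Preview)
Your argument is correct. The paper does not supply its own proof of this lemma: it simply cites \cite[Lemma~A.1]{CDK}. Your diagonalization on the exhausting slabs $Q_m=\Omega\times(-m,m)$ together with the Lebesgue-point argument (against a countable dense family in $L^2(\Omega)$) for the time-wise essential supremum is the standard route and constitutes a complete, self-contained proof.
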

\begin{proof}
See \cite[Lemma~A.1]{CDK}.
\end{proof}

The above two lemmas contain all the ingredients for the construction of a function $\tilde{\vec N}(\cdot,Y)$ such that for a sequence $\epsilon_\mu$ tending to zero, we have
\begin{align*}
\tilde{\vec N}{}^{\epsilon_\mu}(\cdot,Y) &\rightharpoonup \tilde{\vec N}{}(\cdot,Y) \;\text{ weakly in }\, \sW^{1,0}_q(Q_{\bar d_Y}(Y))^{N^2},\\
(1-\zeta)\tilde{\vec N}{}^{\epsilon_\mu}(\cdot,Y) &\rightharpoonup (1-\zeta)\tilde{\vec N}(\cdot,Y) \; \text{ weakly in }\,\sW^{1,0}_2(\Omega\times (-T,T))^{N^2}, \;\; \forall T>0,
\end{align*}
where $1<q<\frac{n+2}{n+1}$ and $\zeta$ is as defined \eqref{wse.eq3aa} with $R=\bar d_Y/2$.
It is routine to verify that $\tilde{\vec N}(\cdot,Y)$ satisfies the same estimates as in Lemma~\ref{wse.lem1}; see \cite[Sec.~4.2]{CDK}.
Note that \eqref{eq5.12c} implies $\int_\Omega \tilde{\vec N}(x,t,y,s)\,dx =0$.
We define $\vec{N}(\cdot,Y)$ by
\begin{equation}	\label{ntilde}
\vec N(x,t,y,s)=\tilde{\vec N}(x,t,y,s)+\frac{1}{\abs{\Omega}}H(t-s)\vec I.
\end{equation}
Then, by the previous remark that it is clear that $\vec N(X,Y)$ satisfies the local estimates in Remark~\ref{rmk3.8}, and thus the property a) for the Neumann Green's function defined in Sec.~\ref{sec:ng}.
We shall now show that $\vec N(X,Y)$ also satisfies the properties b) and c) so that $\vec N(X,Y)$ is indeed the Neumann Green's function.
To verify the property b), let us assume $\vec \phi=(\phi^1,\ldots, \phi^N)^T$ is supported in $\bar\Omega \times (-T,T)$, where $-T<s<T$, and  note that
\begin{align}
\label{eq5.27a}
\vec N^{\epsilon_\mu}(\cdot,Y) &\rightharpoonup \vec N(\cdot,Y) \;\text{ weakly in }\, \sW^{1,0}_q(Q_{\bar d_Y}(Y))^{N^2},\\
\label{eq5.28b}
(1-\zeta)\vec N^{\epsilon_\mu}(\cdot,Y) &\rightharpoonup (1-\zeta) \vec N(\cdot,Y) \; \text{ weakly in }\,\sW^{1,0}_2(\Omega\times (-T,T))^{N^2}.
\end{align}
Since $k$th column of $\vec N^\epsilon(\cdot,Y)$ is the weak solution in $\sV^{1,0}_2(\Omega\times(s,T))^N$ of the problem \eqref{mn.eq1}, we find that
\begin{multline*}
\int_\Omega \Phi_{\epsilon_\mu,y}(x) \phi^k(x)\,dx\\=
\int_s^T\!\!\!\int_{\Omega} \left\{  -N^{\epsilon_\mu}_{ik}(X,Y)\phi^i_t(X)+ a^{\alpha\beta}_{ij} D_\beta N^{\epsilon_\mu}_{jk}(X,Y) D_\alpha \phi^i(X)\right\} \,dX
\end{multline*}
By writing $\vec \phi= \eta \vec \phi + (1-\eta)\vec \phi$, where $\eta \in \sC^\infty_c(Q_{\bar d_Y}(Y))$ satisfying $\eta =1$ on $Q_{\bar d_Y/2}(Y)$, and using \eqref{eq5.27a}, \eqref{eq5.28b}, and taking $\mu\to \infty$ in the above, we get the identity \eqref{eq2.04x}; see \cite[p. 1662]{CDK} for the details.
To verify the property c), let us assume that $\vec f$ is supported in $\bar \Omega\times (a,b)$, where $a<s<b$ and $\tilde{\vec u}$ be the unique weak solution in $\sV^{1,0}_2(\Omega\times (a,b))^N$ of the problem \eqref{mn.eq2}.
By setting $\tilde{\vec u}(x,t)=0$ for $t>b$ and letting $a\to -\infty$, we may assume that $\tilde{\vec u}$ is defined on the entire $\cQ$.
Then, similar to \eqref{mn.eq3}, we have
\[
\int_\Omega \Phi_{y,\epsilon_\mu}(x) \tilde{u}{}^k(x,s)\,dx =\iint_{\cQ} N^{\epsilon_\mu}_{ik}(X,Y) f^i(X)\,dX.
\]
By the condition A2, it follows that $\tilde{\vec u}$ is locally H\"older continuous in $\cQ$; see the remark we made in deriving \eqref{mn.eq4d}.
By writing $\vec f=\zeta \vec f+ (1-\zeta) \vec f$ and using \eqref{eq5.27a}, \eqref{eq5.28b}, and taking the limit $\mu\to \infty$, we then get 
\[
\tilde{\vec u}(Y)=\int_\cQ \vec N(X,Y)^T \vec f(X)\,dX.
\]
Therefore, we have $\tilde{\vec u}\equiv \vec u$ and thus the property c) is verified.

It is clear from the construction that $\vec N(x,t,y,s)\equiv 0$ if $t<s$.
For any $X=(x,t)\in \cQ$, we define the {mollified Neumann Green's function} of $\sL^{*}$ with a pole at $X$ by letting its $l$-th column to be the unique weak solution in $\sV^{1,0}_2(\Omega\times (T,t))^N$, where $T<t$ is fixed but arbitrary, of the backward problem
\[
\left\{
\begin{aligned}
\sL^{*} \vec u= 0\quad &\text{in }\; \Omega\times(T,t)\\
\partial \vec u/\partial \nu^{*}=  0 \quad&\text{on }\; \partial\Omega\times (T,t)\\
\vec u= \Phi_{x,\epsilon}\vec e_l \quad&\text{on }\; \Omega\times \set{t},
\end{aligned}
\right.
\]
As before, we extend it to the entire $\cQ$ by letting it to vanish on $\Omega\times (t,\infty)$.
By a similar argument as above, we obtain the Neumann Green's function $\vec N^{*}(\cdot, X)$ of $\sL^{*}$ that satisfies the natural counterparts of  the properties of the Neumann Green's function for $\sL$.
Note that the condition A2 together with the estimates i), ii) listed in Remark~\ref{rmk3.8} and its counterparts imply that $\vec N(\cdot,Y)$ and $\vec N^{*}(\cdot,X)$ are locally H\"older continuous in $\cQ\setminus \set{Y}$ and $\cQ\setminus\set{X}$, respectively.
Using the continuity discussed above and proceeding similar to \cite[Lemma~3.5]{CDK}, we find that
\begin{equation}		\label{ctn.eq1}
\vec N(Y,X)=\vec N^{*}(X,Y)^T, \quad \forall X,Y\in \cQ, \quad X\neq Y.
\end{equation}
Moreover, similar to \cite[Eq.~(3.44) and (3.45)]{CDK}, for $t>s$, we have
\[
\vec{N}^\epsilon(x,t,y,s)=\int_\Omega \Phi_{y,\epsilon}(z) \vec N(x,t,z,s)\,dz,
\]
which justifies why we call it ``mollified'', and
\begin{equation}		\label{eq5.30x}
\lim_{\epsilon\to 0+} \vec N^\epsilon(x,t,y,s)=\vec N(x,t,y,s).
\end{equation}
By \eqref{ctn.eq1} and the counterpart of the property c) in Sec.~\ref{sec:ng}, we see that $\vec u$ defined by the formula \eqref{eq3.04c} is a weak solution in $\sV^{1,0}_2(\cQ)^N$ of \eqref{eq3.05a}.

We now prove the identity \eqref{EP.1b} for the weak solution in $\sV^{1,0}_2(\Omega\times(s,\infty))^N$ of the problem \eqref{EP.1b1}.
Similar to \eqref{mn.eq3}, we have
\[
\int_\Omega \Phi_{x,\epsilon}(y) u^k(y,t)\,dy =\int_{\Omega} \hat{N}^\epsilon_{ik}(y,s,x,t) \psi^i(y)\,dy,
\]
where $\hat{\vec N}{}^\epsilon(\cdot,X)$ is the mollified Neumann Green's function of $\sL^{*}$.
The condition A2 implies that $\vec u$ is continuous in $\Omega\times (s,\infty)$, and thus we have
\[
\lim_{\epsilon \to 0} \int_\Omega \Phi_{x,\epsilon}(y) u^k(y,t)\,dy = u^k(x,t).
\]
On the other hand,  by \eqref{eq5.30x} and the counterparts of \eqref{wse.eq2b}, together with the dominated convergence theorem, we get
\[
\lim_{\epsilon \to 0} \int_{\Omega} \hat{N}^\epsilon_{ik}(y,s,x,t) \psi^i(y)\,dy=\int_{\Omega} N^{*}_{ik}(y,s,x,t) \psi^i(y)\,dy.
\]
Then, the identity \eqref{EP.1b} follows from \eqref{ctn.eq1}.
Finally, we obtain \eqref{eq3.07c} similar to \eqref{mn.eq4h} and get \eqref{eq3.08d} from \eqref{eq3.07c} and the condition A2.
\hfill\qedsymbol

\subsection{Proof of Theorem  \ref{EP.thm1}: Case when $\abs{\Omega}=\infty$.}
The proof for the case when $\abs{\Omega}=\infty$ is almost identical to the case when $\abs{\Omega}<\infty$.
As a matter of fact, it is even more simple since we do not need to introduce $\tilde{\vec N}{}^\epsilon(X,Y)$.
From the condition A1, we obtain
\[
\norm{\vec N^\epsilon(\cdot,Y)}_{\sL_{2(n+2)/n}(\Omega\times(a,b))} \le \gamma  \tri{\vec N^\epsilon(\cdot,Y)}_{\Omega\times(a,b)}.
\]
By using the above inequality instead of \eqref{eq5.13d} and follow the same augment as in Section~\ref{sec5.1}, we get the same conclusion.
\hfill\qedsymbol

\subsection{Proof of Theorem  \ref{GE.thm1}}
We adopt the argument in \cite{HK04}, which is in turn based on \cite{Davies89, FS}.
Let $\psi$ be a bounded Lipschitz function on $\bR^n$ satisfying $\abs{D\psi}\le M$ a.e. for some $M>0$ be chosen later.
For $t>s$, we define an operator $P^\psi_{s\to t}$ on $L^2(\Omega)^N$ as follows:
For $\vec f \in L^2(\Omega)^N$ and $T>t$, let $\vec u$ be the unique weak solution in $\sV^{1,0}_2(\Omega\times(s,T))^N$ of the problem
\begin{equation}	\label{eq5.31t}		
\left\{
\begin{array}{ll}
\sL\vec u= 0 \quad &\text{in }\; \Omega\times (s,T),\\
\partial \vec u/\partial \nu=0  &\text{on }\;\partial\Omega \times (s,T),\\
\vec u=e^{-\psi}\vec f &\text{on }\; \Omega\times \set{s}
\end{array}
\right.
\end{equation}
and define $P^\psi_{s\to t} \vec f(x)\coloneq  e^{\psi(x)}\vec u(x,t)$.
Then, by \eqref{EP.1b}, we find
\begin{equation}  \label{eq3.60.3}
P^\psi_{s\to t}\vec f(x)=
e^{\psi(x)}\int_{\Omega}\vec N(x,t,y,s)e^{-\psi(y)}\vec f(y)\,dy.
\end{equation}
Then, as in \cite[Sec.~5.1]{CDK}, we derive
\begin{equation} \label{eq3.70}
\norm{P^\psi_{s\to t}\vec f}_{L^2(\Omega)} \leq e^{\vartheta M^2(t-s)}\norm{\vec f}_{L^2(\Omega)},
\end{equation}
where $\vartheta=\lambda^{-3}$.
We set $\rho= \sqrt{t-s} \wedge R_{M}$ and use the condition A3 to estimate
\begin{align*}
e^{-2\psi(x)}\abs{P^\psi_{s\to t}\vec f(x)}^2  &= \abs{\vec u(x,t)}^2\\
&\leq B_1^2 \rho^{-(n+2)} \int_{t-\rho^2}^t \int_{\Omega \cap B_{\rho}(x)}\abs{\vec u(y,\tau)}^2\,dy\,d\tau\\
& \le B_1^2 \rho^{-(n+2)} \int_{t-\rho^2}^t \int_{\Omega \cap B_{\rho}(x)}e^{-2\psi(y)} \abs{P^\psi_{s\to\tau}\vec f(y)}^2\,dy\,d\tau.
\end{align*}
Thus, by using \eqref{eq3.70}, we derive
\begin{align*}
\abs{P^\psi_{s\to t}\vec f(x)}^2 &\le B_1^2 \rho^{-n-2} \int_{t-\rho^2}^t \int_{\Omega \cap B_{\rho}(x)}e^{2\psi(x)-2\psi(y)} \abs{P^\psi_{s\to\tau}\vec f(y)}^2\,dy\,d\tau\\
& \le B_1^2 \rho^{-n-2} \int_{t-\rho^2}^t \int_{\Omega \cap B_{\rho}(x)}e^{2 \rho M }
\abs{P^\psi_{s\to\tau}\vec f(y)}^2\,dy\,d\tau\\
& \le B_1^2 \rho^{-n-2} \, e^{2 \rho M} \int_{t-\rho^2}^t e^{2\vartheta M^2(\tau-s)}\norm{\vec f}_{L^2(\Omega)}^2\,d\tau\\
& \le B_1^2 \rho^{-n}\,e^{2 \rho M+2\vartheta M^2(t-s)}\norm{\vec f}_{L^2(\Omega)}^2.
\end{align*}
We have thus obtained the following $L^2\to L^\infty$ estimate for $P^\psi_{s\to t}$:
\begin{equation} \label{eq3.70.3}
\norm{P^\psi_{s\to t}\vec f}_{L^\infty( \Omega)} \le  B_1 \rho^{-n/2}\,e^{\rho M+\vartheta M^2(t-s)} \norm{\vec f}_{L^2(\Omega)}.
\end{equation}
We also define the operator $Q^\psi_{t\to s}$ on $L^2(\Omega)^N$ for $s<t$ by setting $Q^\psi_{t\to s}\vec g(y)= e^{-\psi(y)}\vec v(y,s)$, where $\vec v$ is the  weak solution in $\sV^{1,0}_2(\Omega\times(T,t))^N$, where $T<s$, of the backward problem
\begin{equation} \label{eq3.6.11}
\left\{
\begin{array}{ll}
\sL^{*}\vec u= 0 \quad &\text{in }\; \Omega\times (T,t),\\
\partial \vec u/\partial \nu^{*}=0  &\text{on }\;\partial\Omega \times (T,t),\\
\vec u=e^{-\psi}\vec g &\text{on }\; \Omega\times \set{t}.
\end{array}
\right.
\end{equation}
Similar to \eqref{eq3.70.3}, we have
\begin{equation} \label{eq3.73.3}
\norm{Q^\psi_{t\to s}\vec g}_{L^\infty(\Omega)} \le  B_1 \rho^{-n/2}\,e^{\rho M+\vartheta M^2(t-s)} \norm{\vec g}_{L^2(\Omega)}.
\end{equation}
Note that by \eqref{eq5.31t} and \eqref{eq3.6.11} we have
\[
\int_{\Omega}\bigl(P^\psi_{s\to t}\vec f\bigr) \cdot \vec g= \int_{\Omega}\vec f\cdot \bigl(Q^\psi_{t\to s} \vec g\bigr).
\]
Therefore, by duality, \eqref{eq3.73.3} implies that for all $\vec f  \in C^\infty_c(\Omega)^N$,  we have
\begin{equation} \label{eq3.74.3}
\norm{P^\psi_{s\to t}\vec f}_{L^2(\Omega)} \le  B_1 \rho^{-n/2}\,e^{\rho M+\vartheta M^2(t-s)} \norm{\vec f}_{L^1(\Omega)}.
\end{equation}
Now,  set $r=(s+t)/2$ and observe that by the uniqueness, we have
\[
P^\psi_{s\to t}\vec f= P^\psi_{r\to t}(P^\psi_{s\to r}\vec f).
\]
Then, by noting that $t-r=r-s=(t-s)/2$ and $\rho/\sqrt{2}\le \sqrt{t-r}\wedge R_{M} \le \rho$, we obtain from \eqref{eq3.70.3} and \eqref{eq3.74.3} that for  all $\vec f\in C^\infty_c(\Omega)^N$, we have
\[
\norm{P^\psi_{s\to t}\vec f}_{L^\infty(\Omega)} \leq C \rho^{-n}\,e^{ 2 \rho M+\vartheta M^2(t-s)} \norm{\vec f}_{L^1(\Omega)},
\]
where $C=2^{n/2}B_1^2$.
For all $x, y\in \Omega$ with $x\neq y$, the above estimate combined with \eqref{eq3.60.3} yields, by duality, that
\begin{equation} \label{eq3.83}
e^{\psi(x)-\psi(y)}\abs{\vec N(x,t,y,s)} \le  C \rho^{-n}\,e^{ 2 \rho M+\vartheta M^2(t-s)}.
\end{equation}
Let $\psi(z)\coloneq M \psi_0(\abs{z-y})$, where $\psi_0$ is defined on $[0,\infty)$ by
\[
\psi_0(r)=\begin{cases}r&\text{if $r \le \abs{x-y}$} \\
\abs{x-y}&\text{if  $r>\abs{x-y}$}.
\end{cases}
\]
Then, $\psi$ is a bounded Lipschitz function satisfying $\abs{D\psi} \le M$ a.e.
Take $M=\abs{x-y}/2\vartheta(t-s)$ and set $r=\abs{x-y}/\sqrt{t-s}$.
By \eqref{eq3.83} and the obvious inequality $\rho/\sqrt{t-s}\le 1$, we have
\[
\abs{\vec N(x,t,y,s)}\le  C\rho^{-n}\, \exp(r/\vartheta-r^2/4\vartheta).
\]
Let $A=A(\vartheta)=A(\lambda)$ be chosen so that
\[
\exp(\tau/\vartheta-\tau^2/4\vartheta)\le A\exp(-\tau^2/8\vartheta),\quad\forall \tau\in [0,\infty).
\]
If we set $\kappa=1/8\vartheta=\lambda^3/8$, then we obtain
\[
\abs{\vec N(x,t,y,s)}\le C\rho^{-n}\exp\left\{-\kappa\abs{x-y}^2/(t-s)\right\}.
\]
where $C=C(n,N,\lambda,B_1)$.
We have thus proved \eqref{GE.thm1.1}.
Finally, \eqref{GE.thm1.1} gives
\[
\abs{\vec N(x,t,y,s)}\le C\left\{1\vee ((t-s)/R_{M}) \right\}^{n/2}\, (t-s)^{-n/2}\exp\left\{-\kappa \abs{x-y}^2/(t-s)\right\},
\]
which clearly implies \eqref{eq2.17dx} if $0<t-s\le T$.
\hfill\qedsymbol

\subsection{Proof of Theorem  \ref{GE.thm2}}
Notice that \eqref{GE.thm2.1} implies that
\begin{equation}					\label{eq5.18}
\abs{\vec N(X,Y)} \leq C \abs{X-Y}_\sP^{-n}\quad \text{if }\, 0<\abs{t-s} <R_{M}^2,
\end{equation}
where $C=C(n,N,\lambda, B_2, \kappa)$.
Then, by the energy inequality (see \cite[Eq. (3.21)]{CDK}) and noting that $R_{M}/2$ and $R_{M}$ are comparable to each other when $R_{M}<\infty$, we derive from \eqref{eq5.18} that for $0<r<R_M$, we have
\begin{equation} \label{eq5.15}
\tri{\vec N(\cdot,Y)}_{\cQ \setminus \bar Q_r(Y)} \le Cr^{-n/2}.
\end{equation}
Let $X\in\cQ$ and $0<R<R_{M}$ be given.
Without loss of generality, we assume $X=0$ and write $Q_R=Q_R(0)$, etc.
Let $\vec u$ be a weak solution of
\[
\sL^{*} \vec u = 0 \text{ in } Q=Q_R^{+}\cap \cQ,\quad \partial \vec u/\partial \nu^{*} =0 \text{ on } S=Q_R^{+}\cap \partial_p \cQ.
\]
We will show that $\vec u$ is locally bounded in $\frac{1}{2}Q$, where we use notation $\alpha Q=Q^{+}_{\alpha R}\cap \cQ$, and satisfies the estimate \eqref{LB}.
The other case can be treated in the same fashion.
Let $\vec w=\eta \vec u$, where $\eta \in \sC^\infty_c(Q_{R/2})$ is such that
\[
0\leq \eta \le 1,\quad \eta \equiv 1\text{ on }Q_{3R/8},\quad \abs{D_x\eta}^2+\abs{\eta_t} \le 32/R^2 .
\]
Then, $\vec w$ becomes a weak solution of
\[
\left\{
\begin{array}{ll}
\sL^{*} \vec w= - \eta_t \vec u - (A^{\beta\alpha})^T D_\beta \vec u D_\alpha \eta - D_\alpha \left( (A^{\alpha\beta})^T D_\beta \eta \vec u\right)
 &\text{in } \Omega\times (0,\infty),\\
\partial \vec w/\partial \nu^{*}= (A^{\beta\alpha})^TD_\beta \eta \vec u \nu_\alpha &\text{on } \partial \Omega \times (0,\infty).
\end{array}
\right.
\]
For $Y=(y,s) \in \frac{1}{4}Q$, let $\vec N^\epsilon(\cdot,Y)$ be the mollified Neumann Green's function as constructed in the proof of Theorem~\ref{EP.thm1}.
Then similar to \eqref{mn.eq3}, we have
\begin{multline*}
\int_\Omega \Phi_{y,\epsilon}(x) \eta(x,s) u^k(x,s)\,dx 
= -\int_{\frac{1}{2} Q} \eta_t u^i N^\epsilon_{ik}(\cdot,Y)\\
-\int_{\frac{1}{2} Q} a^{\beta\alpha}_{ji}  D_\beta u^j D_\alpha\eta N_{ik}^\epsilon(\cdot,Y)+ \int_{\frac{1}{2} Q} a^{\beta\alpha}_{ji} D_\beta \eta u^j D_\alpha N_{ik}^\epsilon(\cdot,Y).
\end{multline*}
Since $\abs{X-Y}_\sP> R/8$ for $X \in Q\cap \supp (\abs{D_x \eta}+\abs{\eta_t})$, the integrals on the right hand side are in fact integrals over $\frac{1}{2}Q \setminus Q_{R/8}(Y)$.
Then, we take limit $\epsilon$ to zero in the above by using \eqref{eq5.27a}, \eqref{eq5.28b} to get
\begin{multline}				\label{eq4.28v}
u^k(Y) = - \int_{\frac{1}{2}Q}  \eta_t  u^i  N_{ik}(\cdot,Y)- \int_{\frac{1}{2}Q} a^{\alpha\beta}_{ij}  N_{jk}(\cdot,Y)D_\alpha u^i D_\beta\eta\\
+\int_{\frac{1}{2}Q} a^{\alpha\beta}_{ij} D_\beta  N_{jk}(\cdot,Y)u^i D_\alpha \eta \eqcolon  I_1+I_2+I_3.
\end{multline}
Denote $A_R(Y)\coloneq \cQ\cap \left(Q_{3R/4}(Y)\setminus Q_{R/8}(Y)\right) \supset \frac{1}{2}Q\setminus Q_{R/8}(Y)$.
By the properties of $\eta$, H\"older's inequality, and \eqref{eq5.15}, we estimate
\begin{align*}
\abs{I_1}  & \le C R^{-2} \norm{\vec N(\cdot,Y)}_{\sL_{2,\infty}(A_R(Y))}\, \norm{\vec u}_{\sL_{2,1}(\frac{1}{2}Q)} \le C R^{-(n+2)/2} \norm{\vec u}_{\sL_2(\frac{1}{2}Q)}.\\
\abs{I_3}  & \le C R^{-1} \norm{D_x\vec N(\cdot,Y)}_{\sL_2(A_R(Y))}\, \norm{\vec u}_{\sL_2(\frac{1}{2}Q)} \le C R^{-(n+2)/2} \norm{\vec u}_{\sL_2(\frac{1}{2}Q)}.
\end{align*}
Also, by using \eqref{eq5.18} and the energy inequality, we estimate
\[
\abs{I_2}  \le C R^{-1} \norm{\vec N(\cdot,Y)}_{\sL_2(A_R(Y))}\, \norm{D \vec u}_{\sL_2(\frac{1}{2}Q)} \le C R^{-(n+2)/2} \norm{\vec u}_{\sL_2(Q)}.
\]
By combining above estimates for $I_1, I_2$, and $I_3$, we conclude from \eqref{eq4.28v} that
\[
\norm{\vec u}_{\sL_\infty(\frac{1}{4}Q)} \le C R^{-(n+2)/2} \norm{\vec u}_{\sL_2(Q)},
\]
where $C=C(n,N,\lambda, B_2,\kappa)$.
Then, we obtain the condition A3 from the above inequality by a standard covering argument.
\hfill\qedsymbol

\subsection{Proof of Theorem~\ref{thm4.4a}}
The idea of proof is the same as that of \cite[Theorem~2.12]{DK09} but we reproduce some key steps of the proof for the completeness.
Let $\vec K(x,y,t)=\tilde{\vec N}(x,t,y,0)$, where $\tilde{\vec N}(x,t,y,s)$ is as in \eqref{ntilde} and $\vec N(x,t,y,s)$ is the Neumann Green's function of $\sL=\partial/\partial t +L$ in $\cQ$.
We define the Neumann function $\vec G(x,y)$ by
\begin{equation}
\label{eq:g01}
\vec G(x,y)\coloneq  \int_0^\infty \vec K(x,y,t)\,dt.
\end{equation}
Since $\Omega$ is an extension domain and $\abs{\Omega}<\infty$, there is a constant $\varrho=\varrho(\Omega)$ such that for all $u \in \tilde W^{1,2}(\Omega)$, we have
\begin{equation}				\label{eq002v}
\norm{u}_{L^2(\Omega)}\le \varrho \norm{D u}_{L^2(\Omega)}.
\end{equation}
Then  by \cite[Lemma~3.12]{DK09}, we find that the integral in \eqref{eq:g01} is absolutely convergent for $x \neq y$ and thus $\vec G(x,y)$ is well defined.
Also, it is clear that $\int_\Omega \vec G(x,y)\,dx =0$.
We define the Neumann function $\vec G^{*}(x,y)$ of $L^{*}$ similarly.
Then, by \eqref{ctn.eq1} we find that (see \cite[Eq.~(3.21)]{DK09})
\[
\vec G^{*}(x,y)= \int_0^\infty \tilde{\vec N}{}^{*}(x,-t,y,0)\,dt=\int_0^\infty \vec K(y,x,t)^T\,dt=\vec G(y,x)^T,
\]
where $\tilde{\vec N}{}^{*}(x,t,y,s)=\vec N^{*}(x,t,y,s)-\frac{1}{\abs{\Omega}}H(t-s)\vec I$ and $\vec N^{*}(x,t,y,s)$ is the Neumann Green's function for $\sL^{*}=-\partial/\partial t +L^*$.
Let us denote
\[\bar{\vec K}(x,y,t)=\int_0^t \vec K(x,y,s)\,ds.\]
\begin{lemma}			\label{lem4.10a}
The following estimates hold uniformly for $t>0$ and $y\in\Omega$.
\begin{enumerate}[a)]
\item
$\norm{\bar{\vec K}(\cdot,y,t)}_{L^p(B(y,d_y))} \le C_p (d_y^2+\varrho^2) d_y^{2/p-2},\quad \forall p \in [1,2)$.
\item
$\norm{D_x\bar{\vec K}(\cdot,y,t)}_{L^p(B(y,d_y))}\le C_p (d_y^2+\varrho^2) d_y^{-3+2/p},\quad \forall p \in [1,4/3)$.
\item
$\norm{\bar{\vec K}(\cdot,y,t)}_{L^{4}(\Omega\setminus B(y,r))}
\le C(r^2+\varrho^2) r^{-3/2},\quad \forall r \in (0, d_y]$.
\item
$\norm{D_x\bar{\vec K}(\cdot,y,t)}_{L^2(\Omega\setminus B(y,r))} \le C (r^2+\varrho^2) r^{-2},\quad \forall r \in (0, d_y]$.
\end{enumerate}
\end{lemma}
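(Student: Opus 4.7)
My plan is to prove the four estimates in parallel by splitting the time integral
$$\bar{\vec K}(\cdot,y,t) = \int_0^{\rho^2}\vec K(\cdot,y,s)\,ds + \int_{\rho^2}^{t}\vec K(\cdot,y,s)\,ds,$$
with $\rho = d_y$ for parts a), b) and $\rho = r$ for parts c), d). The short-time piece will contribute the $\rho^2$ term and the long-time piece the $\varrho^2$ term in the prefactor $(\rho^2+\varrho^2)$.

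For the short-time piece I will apply Jensen's inequality in $s$, namely $|\int_0^{\rho^2}f\,ds|^p\le \rho^{2(p-1)}\int_0^{\rho^2}|f|^p\,ds$, integrate in $x$, and invoke Remark~\ref{rmk3.8}: item v) for a) and item vi) for b), both applied on the cylinder $Q_{d_y}(Y)$; item ii) for c) and item i) for d), applied on $\cQ\setminus \bar Q_r(Y)$ (which contains $(\Omega\setminus B(y,r))\times(0,r^2)$ since $\vec K(\cdot,y,s)\equiv 0$ for $s<0$). A direct count of exponents --- for instance $\rho^{2(p-1)}\cdot (\rho^{-2+4/p})^p = \rho^2$ in case a) --- yields the claimed $\rho^2$ factor in every case. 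For the long-time piece I exploit the mean-zero property $\int_\Omega \vec K(\cdot,y,s)\,dx = 0$ for $s>0$, which puts $\vec K(\cdot,y,s)$ in $\tilde W^{1,2}(\Omega)$ so that \eqref{eq002v} applies: $\|\vec K(\cdot,y,s)\|_{L^2(\Omega)}\le \varrho\|D\vec K(\cdot,y,s)\|_{L^2(\Omega)}$. Combined with the energy identity $\tfrac{d}{ds}\|\vec K\|_{L^2}^2\le -2\lambda\|D\vec K\|_{L^2}^2$ and the initial bound $\|\vec K(\cdot,y,\rho^2)\|_{L^2(\Omega)}\le C\rho^{-1}$ coming from Remark~\ref{rmk3.8}(i), this produces the exponential $L^2$-decay $\|\vec K(\cdot,y,s)\|_{L^2(\Omega)}\le C\rho^{-1} e^{-\lambda(s-\rho^2)/\varrho^2}$ for $s\ge \rho^2$, and hence $\int_{\rho^2}^\infty \|\vec K(\cdot,y,s)\|_{L^2(\Omega)}\,ds\le C\varrho^2/\rho$. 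A weighted energy estimate with weight $e^{\lambda s/\varrho^2}$ followed by Cauchy--Schwarz in $s$ gives the analogous bound $\int_{\rho^2}^\infty\|D\vec K(\cdot,y,s)\|_{L^2(\Omega)}\,ds\le C\varrho/\rho$.

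These two long-time bounds, together with Minkowski's integral inequality in $s$ and H\"older's inequality on $B(y,d_y)$ to pass from $L^2$ to $L^p$ for $p<2$, dispose of the long-time contributions of a), b), and d). For c), I will apply Minkowski in $s$ and then the multiplicative Sobolev inequality \eqref{eq:mei} in its $n=2$ form $\|w\|_{L^4(\Omega)}\le\gamma\|Dw\|_{L^2}^{1/2}\|w\|_{L^2}^{1/2}$ to the mean-zero function $w=\int_{\rho^2}^{t}\vec K(\cdot,y,s)\,ds$; the resulting bound of order $\varrho^{3/2}/r$ is majorized by $(r^2+\varrho^2)r^{-3/2}$ via Young's inequality $\varrho^{3/2}r^{1/2}\le \tfrac{3}{4}\varrho^2 + \tfrac{1}{4}r^2$. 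The main technical step will be the weighted energy estimate producing $\int\|D\vec K\|_{L^2}\,ds\le C\varrho/\rho$ with the correct $\varrho/\rho$ scaling; once this is in hand, everything else is careful bookkeeping of powers of $\rho$ and $\varrho$.
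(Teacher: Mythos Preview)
Your proposal is correct. The paper itself does not supply a proof but defers to \cite[Lemma~3.23]{DK09}; your time-splitting at $s=\rho^2$, with Remark~\ref{rmk3.8} handling the short-time piece and the exponential $L^2$-decay (via \eqref{eq002v} and the energy identity) handling the long-time piece, is precisely the standard mechanism and matches what the cited reference does. Two small remarks: (1) Remark~\ref{rmk3.8} requires $R<\bar d_Y$, and in the setting of Theorem~\ref{thm4.4a} one has $R_c=\infty$ so $\bar d_Y=d_y$; the endpoint $\rho=d_y$ (or $r=d_y$) is then reached by a limit. (2) In part~c) your description mixes two orders --- Minkowski first, then Sobolev on each slice, versus Sobolev on $w$ first, then Minkowski on $Dw$ and $w$ --- but both routes give the same bound $C\varrho^{3/2}/r$, so this is only a wording issue.
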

\begin{proof}
See \cite[Lemma~3.23]{DK09}.
\end{proof}

We have to show that $\vec G(x,y)$ defined by the formula \eqref{eq:g01} satisfies the properties i) -- iii) in Section~\ref{sec:4.2enf}.
We focus on iii), which may seem less clear than i) or ii).
For any $\vec f\in C^\infty_c(\bar \Omega)$ satisfying $\int_\Omega \vec f(x)\,dx =0$, let $\vec u$ be defined by \eqref{choej}. 
Note that
\[
\vec v(x,t) \coloneq \int_\Omega \bar{\vec K}(y,x,t)^T \vec f(y)\,dy= \int_0^t \!\!\!\int_\Omega \vec K(y,x,s)^T \vec f(y)\,dy\, ds
\]
is absolutely convergent by Lemma~\ref{lem4.10a}, and
\begin{align}
\label{eq:E29a}
\lim_{t\to\infty} \vec v(x,t)&=\int_\Omega \vec G(y,x)^T \vec f(y)\,dy=\vec u(x),\\
\nonumber
\vec v_t(x,t)&=\int_\Omega \vec K(y,x,t)^T \vec f(y)=\int_\Omega \vec N^{*}(x,-t,y,0) \vec f(y)\,dy,
\end{align}
where we have used the assumption $\int_\Omega \vec f=0$ in the last step.
Therefore, for any $T>0$, $\vec w \coloneq \vec v_t$ is the weak solution in $\sV^{1,0}_2(\Omega\times(0,T))^N$ of the problem
\[
\left\{
\begin{aligned}
\vec w_t+L^{*} \vec w= 0 \quad &\text{in }\; \Omega\times (0,T)\\
\partial \vec w/\partial \nu^{*}= 0 \quad&\text{on }\; \partial\Omega\times(0,T)\\
\vec w(\cdot, 0)=\vec f \quad&\text{on }\; \Omega,
\end{aligned}
\right.
\]
Then, as in the proof of \cite[Lemma~3.12]{DK09}, for $t>0$, we have
\begin{equation}		\label{eq:E29b}
\norm{\vec v_t(\cdot,t)}_{L^2(\Omega)} \le C e^{-\lambda\varrho^{-2}t}\norm{\vec f}_{L^2(\Omega)},
\end{equation}
which implies $\vec v_t(\cdot,t)\in L^2(\Omega)^N$.
Also, by using $\vec N^{*}(x,t,y,s)=\vec N^{*}(x,t-s,y,0)$, we find
\[
\vec v(x,t)=\int_{-t}^0 \int_\Omega \vec N^{*}(x,-t,y,s) \vec f(y)\,dy\, ds,
\]
and thus, for $T>0$, $\vec v$ is the weak solution in $\sV^{1,0}_2((0,T)\times\Omega)^N$ of the problem
\begin{equation}		\label{eq5.49jw}
\left\{
\begin{aligned}
\vec v_t+L^{*}\vec v= \vec f\quad &\text{in }\; \Omega\times (0,T)\\
\partial \vec v/\partial \nu^{*}= 0 \quad&\text{on }\; \partial\Omega\times(0,T)\\
\vec v(\cdot, 0)=0 \quad&\text{on }\; \Omega.
\end{aligned}
\right.
\end{equation}
Then, by using \eqref{eq:E29b} and \eqref{eq5.49jw}, we obtain (see \cite[Eq.~(3.47)]{DK09})
\[
\norm{D_x \vec v(\cdot,t)}_{L^2(\Omega)} \le C \norm{\vec f}_{L^2(\Omega)}, \quad\text{a.e. }t>0.
\]
Therefore, by  \eqref{eq002v} for a.e. $t>0$, we have
\[
\norm{\vec v(\cdot,t )}_{W^{1,2}(\Omega)} \le C \norm{\vec f}_{L^2(\Omega)}.
\]
Then, by the weak compactness of the space $W^{1,2}(\Omega)$, we find that there is a sequence $t_m\to \infty$ such that $\vec v(\cdot, t_m) \rightharpoonup \tilde{\vec u}$ weakly in $W^{1,2}(\Omega)^N$ for some $\tilde{\vec u} \in W^{1,2}(\Omega)^N$.
By \eqref{eq:E29a}, we must have $\vec u=\tilde{\vec u}$ and similar to \cite[Eq.~(3.48)]{DK09} and the argument after it, we find $\vec u$ is a weak solution of the problem \eqref{hee}.
By using \eqref{eq:E23}, it is easy to see that $\int_\Omega \vec u =0$.
We have thus verified the property iii).
By repeating essentially the same proof of \cite[Theorem~2.12]{CDK}, we find that $\vec G(x,y)$ satisfies the properties i) and ii) as well, and that it has the logarithmic bound \eqref{eq4.03mr}.

If $\Omega$ is a Lipschitz domain, then by modifying \cite[Lemma~4.4]{DK09}, one can show that $\sL=\partial/\partial t+L$ and $\sL^{*}=-\partial/\partial t+L^{*}$ satisfy the condition A3 with $R_M=\diam \Omega=d$ and $B_1$ depending on $\lambda, N$ and Lipschitz character of $\partial\Omega$; see Section~\ref{sec:4.1.3}.
By \eqref{eq5.18}, for $X=(x,t)\in \cQ$ satisfying $\abs{t}<d^2$, we have
\begin{equation}					\label{eq618z}
\abs{\vec K(x,y,t)} \le C\abs{X-\bar{Y}}_{\sP}^{-2},\quad \bar Y=(y,0),
\end{equation}
and by \eqref{eq5.15}, for $0<r < d$, we have
\[
\tri{\vec K(\cdot,y)}_{\cQ \setminus Q(\bar{Y},r)} \le Cr^{-1}.
\]
Similar to \cite[Eq.~(3.59)]{DK09}, for $0<r < d$ and $t \ge 2r^2$, we have
\begin{equation}							\label{eq12.27g}
\abs{\vec K(x,y,t)} \le C r^{-2}e^{-\lambda \varrho^{-2} (t-2r^2)}.
\end{equation}
We set $r \coloneq \frac{1}{2}\min(\varrho, d)$.
If $0<\abs{x-y} \le  r$, then by \eqref{eq:g01}, we have
\[
\abs{\vec G(x,y)} \le \int_0^{\abs{x-y}^2} + \int_{\abs{x-y}^2}^{2r^2}+\int_{2 r^2}^\infty \abs{\vec K(x,y,t)}\,dt \eqcolon  I_1+I_2+I_3.
\]
It then follows from \eqref{eq618z} and \eqref{eq12.27g} that
\begin{align*}
I_1 &\le C\int_0^{\abs{x-y}^2}\abs{x-y}^{-2}\,dt \le C,\\
I_2&\le C\int_{\abs{x-y}^2}^{2r^2}t^{-1}\,dt\le  C+C\ln (r/\abs{x-y}),\\
I_3&\le C\int_{2r^2}^\infty r^{-2} e^{-\lambda \varrho^{-2} (t-2r^2)}\,dt\le C \varrho^2 r^{-2}.
\end{align*}
Combining all together we conclude that if $0<\abs{x-y}\le r=\frac{1}{2}\min(\varrho, d)$, then
\begin{equation}			\label{eq15.08b}
\abs{\vec G(x,y)} \le C\left(1+(\varrho/r)^2+ \ln (r/d)+ \ln (d/\abs{x-y}) \right).\\
\end{equation}
If $\abs{x-y} \ge r=\frac{1}{2}\min(\varrho,d)$, then by \eqref{eq618z} and \eqref{eq12.27g} we have
\begin{multline}							\label{eq6.23ww}
\abs{\vec G(x,y)} \le \int_0^{2r^2}+\int_{2r^2}^\infty \abs{\vec K(x,y,t)}\,dt\\
\le C \int_0^{2r^2} r^{-2} + C \int_{2r^2}^\infty \ r^{-2} e^{-\lambda\varrho^{-2}(t-2r^2)}\,dt \le C+ C \varrho^2 r^{-2}.
\end{multline}
By combining \eqref{eq15.08b} and \eqref{eq6.23ww}, and using $d <\infty$, we get \eqref{eq13.03k}.
\hfill\qedsymbol

\begin{acknowledgment}
We thank Steve Hofmann, Fritz Gesztesy, and Marius Mitrea for helpful discussion and correspondence.
Seick Kim is supported by TJ Park Junior Faculty Fellowship.
\end{acknowledgment}



\end{document}